\newtheorem{theorem}{Theorem}[section]
\newtheorem{conjecture}[theorem]{Conjecture}
\newtheorem{corollary}[theorem]{Corollary}
\newtheorem{example}[theorem]{Example}
\newtheorem{lemma} [theorem]{Lemma}
\newtheorem{problem}[theorem]{Problem}
\newtheorem{proposition}[theorem]{Proposition}
\renewenvironment{proof}[1][Proof]{\textbf{#1.} }{\ \rule{0.5em}{0.5em}}
\newtheorem{observation}[theorem]{Observation}
\title{Changing of the domination number of a graph: edge multisubdivision and edge removal} 
\author{ Vladimir Samodivkin \\
Department of Mathematics \\
University of Architecture Civil Engineering
and Geodesy\\
Hristo Smirnenski 1 Blv., 1046 Sofia, Bulgaria,\\
 \texttt{vlsam\_fte@uacg.bg}
 }
\begin{document}
\maketitle

\newtheorem{them}{Theorem}
\newtheorem{lema}[them]{Lemma}
\renewcommand{\thethem}{ \Alph{them}}
\renewcommand{\thelema}{\Alph{lema}}



\bigskip

 \begin{abstract}
For a graphical property $\mathcal{P}$ and a graph $G$,
 a subset $S$ of vertices of $G$ is a $\mathcal{P}$-set 
 if the subgraph induced by $S$ has the property $\mathcal{P}$. 
 The domination number with respect to the property $\mathcal{P}$,
  denoted by $\gamma_{\mathcal{P}} (G)$,  
  is the minimum cardinality of a dominating $\mathcal{P}$-set. 
   We define the domination multisubdivision number with respect to $\mathcal{P}$,  
denoted by  $msd_{\mathcal{P}}(G)$, as a minimum positive integer $k$ 
such that there exists an edge which must be subdivided $k$ times to change $\gamma_\mathcal{P} (G)$.
    In this paper  
	(a) 	we present necessary and sufficient conditions 
	for a change  of $\gamma_{\mathcal{P}}(G)$ after subdividing an edge of $G$ once, 
   (b) we prove that  if $e$ is an edge   of a graph $G$  then 
	 $\gamma_\mathcal{P} (G_{e,1}) < \gamma_\mathcal{P} (G)$  if and only if 
	$\gamma_\mathcal{P} (G-e) < \gamma_\mathcal{P} (G)$ 
($G_{e,t}$ denote the graph obtained from $G$ by subdivision of  $e$ with $t$ vertices), 
	(c) we also prove that for every edge of a graph $G$ is fulfilled  
$\gamma_{\mathcal{P}}(G-e) \leq \gamma_{\mathcal{P}}(G_{e,3}) \leq  \gamma_{\mathcal{P}}(G-e) + 1$, 
and 
(d)  we show  that $msd_{\mathcal{P}}(G) \leq 3$, 
	where $\mathcal{P}$ is hereditary and closed under union with $K_1$. 
 \end{abstract}
{\bf Keywords} {dominating set, edge subdivision, domination multisubdivision \\ number,   
hereditary graph property}

\medskip
\noindent
{\bf{ MSC 2010: 05C69}}

\section{Introduction}
All graphs considered in this article are finite, undirected, without loops or multiple edges. 
 For the graph theory terminology not presented here, we follow Haynes et al.   \cite{hhs1}. 
We denote the vertex set and the edge set of a graph $G$ by $V(G)$ and $ E(G),$  respectively.  
 The subgraph induced by $S \subseteq V(G)$ is denoted by $\left\langle S,G  \right\rangle$. 
 For a vertex $x$ of $G$,  $N(x,G)$ denotes the set of all  neighbors of $x$ in $G$,   $N[x,G] = N(x,G) \cup \{x\}$
and the degree of $x$ is $deg(x,G) = |N(x,G)|$. The maximum and  minimum  degrees of vertices in the graph $G$ are denoted by $\Delta (G)$ and $\delta (G)$ respectively. 
  For a graph $G$, let $x \in X \subseteq V(G)$. A vertex $y$ is
	a {\it private neighbor  of $x$ with respect to} $X$ if $N[y, G] \cap X = \{x\}$. 
	The {\it private neighbor set of $x$ with respect to} $X$ is $pn_G[x,X] = \{y$  : $N[y,G] \cap X = \{x\}\}$. 
	For a graph $G$ subdivision of the edge $e=uv\in E(G)$ with vertex $x$ leads to a graph with vertex set $V\cup \{x\}$ and edge set $(E-\{uv\})\cup \{ux,xv\}$. 
Let $G_{e,t}$ denote graph obtained from $G$ by subdivision of the edge $e$ with $t$ vertices (instead of edge $e=uv$ we put a path $(u,x_1,x_2,\ldots ,x_t,v)$). For $t=1$ we write $G_e$.

Let $\mathcal{I}$ denote the set of all mutually non-isomorphic graphs. A \emph{ graph property} is any non-empty subset of $\mathcal{I} $. We say that a \emph{ graph $G$ has the  property} $\mathcal{P}$ whenever there exists a graph $H \in \mathcal{P} $ which is isomorphic to $G$. 
For example, we list some graph properties: 

$\bullet$    $\mathcal{O} = \{H \in \mathcal{I}$ : $H$ is totally disconnected$\}$;

$\bullet$      $\mathcal{C} = \{H \in \mathcal{I}$ : $H$ is connected$\}$;
   
$\bullet$        $\mathcal{T} = \{H \in \mathcal{I}$ : $\delta (H) \geq 1$ $\}$;

$\bullet$        $\mathcal{F} = \{H \in \mathcal{I}$ : $H$ is a forest$\}$;
   
$\bullet$       $\mathcal{UK} = \{H \in \mathcal{I}$ :  each component of $H$ is complete$\}$;

$\bullet$       $\mathcal{D}_k = \{H \in \mathcal{I}$ :  $ \Delta (H) \leq k\}$.

A graph property $\mathcal{P}$ is called: (a) hereditary (induced-hereditary), 
if from the fact that a graph $G$ has property $\mathcal{P}$,
 it follows that all  subgraphs (induced subgraphs) of $G$ also belong to $\mathcal{P}$, and  
 (b)  nondegenerate if $\mathcal{O}  \subseteq \mathcal{P}$.
  Any  set $S \subseteq V(G)$ such that the induced subgraph $\left\langle S,G  \right\rangle$  
	possesses the property $\mathcal{P}$ is called a $\mathcal{P}$- set.  
	 Note that: (a) $\mathcal{I}$, $\mathcal{F}$ and $\mathcal{D}_k$ are nondegenerate and hereditary  properties; 
(b)  $\mathcal{UK}$ is nondegenerate,  induced-hereditary and is not hereditary;
(c) both $\mathcal{C}$ and $\mathcal{T}$ are neither  induced-hereditary nor nondegenerate. 
	  For a survey on this subject we refer to Borowiecki et al. \cite{bbfms}.

  A   set of vertices $D \subseteq V(G)$ is a  dominating set of a graph $G$  
if every vertex not in $D$ is adjacent to a vertex in $D$. 
   The  domination number with respect to the property $\mathcal{P}$, denoted by $\gamma_{\mathcal{P}} (G)$,
	is the smallest cardinality of a dominating $\mathcal{P}$-set of $G$. 
   A dominating $\mathcal{P}$-set of $G$ with cardinality $\gamma_{\mathcal{P}} (G)$ is called a 
   $\gamma_{\mathcal{P}}$-{\it set of} $G$. 
   If a property $\mathcal{P}$ is nondegenerate, then every maximal independent set is a $\mathcal{P}$-set and thus $\gamma_{\mathcal{P}} (G)$ exists. 
   Note that $\gamma_{\mathcal{I}} (G)$, $\gamma_{\mathcal{O}} (G)$, $\gamma_{\mathcal{C}} (G)$, $\gamma_{\mathcal{T}} (G)$,$\gamma_{\mathcal{F}} (G)$ and $\gamma_{\mathcal{D}_k} (G)$, are the 
 well known as the domination number $\gamma (G)$, the independent domination number $i(G)$, 
 the connected domination number $\gamma_c (G)$, the total domination number $\gamma_t (G)$, 
the acyclic domination number $\gamma_a (G)$  and the $k$-dependent domination number $\gamma^k (G)$,
respectively (see \cite{hhs1}). 
 The concept of domination with respect to any graph property $\mathcal{P}$
  was introduced by Goddard et al. \cite{ghk} and has been studied, for example, 
	in  \cite{m,sam, sam3, sam4, sam5} and elsewhere.    
	
It is often of interest to know how the value of a graph parameter is affected when a small change is made in a graph.  
In \cite{sam}, the present author began the study of the effects on 
 $\gamma_\mathcal{P}(G)$ when a graph $G$ is modified by deleting a vertex or by adding an edge 
($\mathcal{P}$ is  nondegenerate).   
 In this paper we concentrate on effects on $\gamma_{\mathcal{P}} (G)$ when a graph is modified 
 by  deleting/subdividing an edge. 
  An edge $e$ of a graph $G$ is called a $\gamma_{\mathcal{P}}$-$ER^-$-critical edge of $G$  
	if $\gamma_{\mathcal{P}}(G) > \gamma_{\mathcal{P}}(G-e)$. 
	Note that (a) there do not exist $\gamma$-$ER^-$-critical edges (see \cite{gm}), 
	(b) Grobler \cite{g}  was the first who began the investigation of  $\gamma_{\mathcal{P}}$-$ER^-$-critical edges
	      when $\mathcal{P} = \mathcal{O}$, and 	
(c) necessary and sufficient conditions for an edge of a graph $G$ 
      to be $\gamma_{\mathcal{H}}$-$ER^-$-critical may be found in \cite{sam}.


When  an edge of a graph $G$ is subdivided,  the  domination number with respect to the property $\mathcal{P}$ can increase or decrease. 
For instance, if $G$ is a star $K_{1,p}$, $p \geq 2$, and $\{K_1, 2K_1\} \subseteq \mathcal{P} \subseteq \mathcal{I}$
 then $\gamma_{\mathcal{P}} (G) = 1$ and $\gamma_{\mathcal{P}} (G_e) =2$ for all $e$. 
If a graph $G$ is obtained by three stars $K_{1,p}$ and three edges $e_1, e_2, e_3$ joining their centers
 then $\gamma_{\mathcal{F}} (G) = 2+p$ and $\gamma_{\mathcal{F}} (G_{e_i}) =3$, $i=1,2,3$.
This motivates the next definitions.

For any  nondegenerate property $\mathcal{P} \subseteq \mathcal{I}$  we define the edge $e$ of a graph $G$ to be 
(a) a $\gamma_{\mathcal{P}}$-$S^+$-critical edge of $G$  if $\gamma_{\mathcal{P}}(G) < \gamma_{\mathcal{P}}(G_e)$, 
and 
(b) a $\gamma_{\mathcal{P}}$-$S^-$-critical edge of $G$  if $\gamma_{\mathcal{P}}(G) >\gamma_{\mathcal{P}}(G_e)$.
In Section 2:  (a) 	we present necessary and sufficient conditions 
	for a change  of $\gamma_{\mathcal{P}}(G)$ after subdividing an edge of $G$ once,
 and 
   (b) we prove that  an edge  $e$ of a graph $G$ is $\gamma_{\mathcal{H}}$-S$^-$-critical if and only if 
      $e$ is $\gamma_{\mathcal{H}}$-$ER^-$-critical, where $\mathcal{H} \subseteq \mathcal{I}$ 
      is any  induced-hereditary and closed under union with $K_1$ graph property. 
			
			In Section 3 we deals with  changing of $\gamma_{\mathcal{P}}(G)$ when 
			an edge of $G$ is multiple subdivided. 
			To present our results we need the following definitions. 
			
For every edge $e$ of a graph $G$ let 

$\bullet$  $msd_{\mathcal{P}}(e) = \min\{t \mid \gamma_\mathcal{P}(G_{e,t}) \not= \gamma_\mathcal{P}(G)\}$;  

$\bullet$  $msd_{\mathcal{P}}^+(e) = \min\{t \mid \gamma_\mathcal{P}(G_{e,t})  > \gamma_\mathcal{P}(G)\}$;

$\bullet$  $msd_{\mathcal{P}}^-(e) = \min\{t \mid \gamma_\mathcal{P}(G_{e,t})  < \gamma_\mathcal{P}(G)\}$.

If $\gamma_\mathcal{P}(G_{e,t})  \geq \gamma_\mathcal{P}(G)$  for every $t$, $t \geq 1$, then 
we write $msd_{\mathcal{P}}^-(e) =  \infty$. 
If $\gamma_\mathcal{P}(G_{e,t})  \leq \gamma_\mathcal{P}(G)$  for every $t$, $t \geq 1$, then 
we write $msd_{\mathcal{P}}^+(e) =  \infty$.

For every graph $G$ with at least one edge and every nondegenerate property $\mathcal{P}$, we define:

$\mathbf{(D_1)}$ the domination multisubdivision (plus domination multisubdivision, minus domination multisubdivision) number  
with respect to the property $\mathcal{P}$, denoted 
$msd_{\mathcal{P}}(G)$  ($msd_{\mathcal{P}}^+, msd_{\mathcal{P}}^-(G)$, respectively) 
to be 

$\bullet$  $msd_{\mathcal{P}}(G) = \min\{msd_{\mathcal{P}}(e) \mid e \in E(G)\}$, 

$\bullet$ $msd_{\mathcal{P}}^+(G) = \min\{msd_{\mathcal{P}}^+(e) \mid e \in E(G)\}$, and 

$\bullet$ $msd_{\mathcal{P}}^-(G) = \min\{msd_{\mathcal{P}}^-(e) \mid e \in E(G)\}$,
\\
 respectively.
If $\gamma_\mathcal{P}(G_{e,t})  \geq \gamma_\mathcal{P}(G)$  for every $t$, and every edge $e \in E(G)$, then 
we write $msd_{\mathcal{P}}^-(G)  =  \infty$. 

The parameters $msd^+(G)$ and $msd^+_\mathcal{T}(G)$ (in our designation) 
was introduced by   Dettlaff, Raczek and  Topp  in \cite{drt} and 
 by Avella-Alaminos,  Dettlaff, Lema\'nska and   Zuazua \cite{adlz}, respectively. 
Note that in the case when $\mathcal{P} = \mathcal{I}$, clearly, $msd(G) = msd^+(G) $, 
 and $msd^-(G) =  \infty$.   
 In Section 3 we prove that for every edge of a graph $G$ is fulfilled  
$\gamma_{\mathcal{P}}(G-e) \leq \gamma_{\mathcal{P}}(G_{e,3}) \leq  \gamma_{\mathcal{P}}(G-e) + 1$  
 and we present  necessary and sufficient conditions for the validity  of 
$\gamma_{\mathcal{P}}(G-e)= \gamma_{\mathcal{P}}(G_{e,3})$. 
Our main result in this section is  that $msd_{\mathcal{P}}(G) \leq 3$ for any graph $G$ 
and any hereditary and closed under union with $K_1$ graph-property $\mathcal{P}$.

\section{Single subdivision: critical edges}

We begin this section with a characterization of $\gamma_{\mathcal{P}}$-$S^+$-critical edges of a graph. 
 Note that if a property $\mathcal{P}$ is induced-hereditary and closed under union with $K_1$  then $\mathcal{P}$ is nondegenerate.

\begin{theorem} \label{plus1}
Let $\mathcal{H} \subseteq \mathcal{I}$ be 
hereditary and closed under union with $K_1$. 
Let $G$ be a graph and $e = uv \in E(G)$. 
Then $\gamma_\mathcal{H} (G_e) \leq \gamma_\mathcal{H} (G) +1$. 
If $e$ is  a $\gamma_{\mathcal{H}}$-$S^+$-critical edge of $G$ 
then $\gamma_\mathcal{H} (G_e) = \gamma_\mathcal{H} (G) +1$ and 
for each $\gamma_\mathcal{H}$-set $M$ of $G$ one of the following holds:
\begin{itemize}
\item[(i)] $u,v \in V(G) -M$; 
\item[(ii)] $u\in M$, $v \in pn_G[u,M]$ and $pn_G[u,M]$ is not a subset of $\{u, v\}$; 
\item[(iii)] $v\in M$, $u \in pn_G[v,M]$ and $pn_G[u,M]$ is not a subset of $\{u, v\}$.
\end{itemize}
If $e$ is not $\gamma_{\mathcal{P}}$-$S^+$-critical 
 and for each $\gamma_\mathcal{H}$-set $M$ of $G$ one of (i), (ii) and (iii) holds 
then there is a dominating $\mathcal{H}$-set $R$ of $G-uv$ with $u, v \in R$ and 
$|R| \leq \gamma_\mathcal{H} (G)$.
\end{theorem}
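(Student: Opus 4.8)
The plan is to prove the three assertions in turn, using only that $\mathcal{H}$ is closed under deletion of edges (it is hereditary) and under adjoining an isolated vertex to a member (it is closed under union with $K_1$). Write $x$ for the vertex subdividing $e=uv$, so $N(x,G_e)=\{u,v\}$ and $G-uv=G_e-x$. For the inequality $\gamma_\mathcal{H}(G_e)\le\gamma_\mathcal{H}(G)+1$, I would take a $\gamma_\mathcal{H}$-set $M$ of $G$ and split on $M\cap\{u,v\}$: if $u,v\notin M$ then $M\cup\{x\}$ works, since $x$ is isolated in $\langle M\cup\{x\},G_e\rangle=\langle M,G\rangle\cup K_1$; if $u,v\in M$ then $M$ works, since $\langle M,G_e\rangle=\langle M,G\rangle-uv$ is a subgraph of $\langle M,G\rangle\in\mathcal{H}$; if $u\in M$, $v\notin M$, then $M$ works when $v$ has a neighbour of $M$ other than $u$, and $M\cup\{v\}$ works (again the new vertex $v$ becomes isolated) when $N(v,G)\cap M=\{u\}$. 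In every case this produces a dominating $\mathcal{H}$-set of $G_e$ of size at most $|M|+1$.

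The equality $\gamma_\mathcal{H}(G_e)=\gamma_\mathcal{H}(G)+1$ for an $S^+$-critical edge is immediate from this inequality. For the structural claim I would argue by contraposition: assuming some $\gamma_\mathcal{H}$-set $M$ of $G$ satisfies none of (i)--(iii), I produce a dominating $\mathcal{H}$-set of $G_e$ of size $|M|$, so $e$ is not $S^+$-critical. Failure of (i) forces $u\in M$ or $v\in M$; if both, $M$ already dominates $G_e$. If, say, $u\in M$ and $v\notin M$, then failure of (ii) means either $v$ has a neighbour of $M$ other than $u$ (so $M$ dominates $G_e$), or $pn_G[u,M]\subseteq\{u,v\}$, in which case $(M\setminus\{u\})\cup\{x\}$ is a dominating $\mathcal{H}$-set of $G_e$: every vertex outside $\{u,v\}$ that $u$ dominated has another dominator in $M$, and $x$ is isolated in $\langle(M\setminus\{u\})\cup\{x\},G_e\rangle=\langle M\setminus\{u\},G\rangle\cup K_1$. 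The case $v\in M$, $u\notin M$ is symmetric.

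For the converse, since $e$ is not $S^+$-critical, fix a minimum dominating $\mathcal{H}$-set $D$ of $G_e$; then $|D|=\gamma_\mathcal{H}(G_e)\le\gamma_\mathcal{H}(G)$ and $D\cap\{x,u,v\}\neq\emptyset$ because $D$ dominates $x$. I would reduce the statement to: \emph{$G_e$ has a minimum dominating $\mathcal{H}$-set $D^{\circ}$ with $x\notin D^{\circ}$ and $u,v\in D^{\circ}$}, for then $R:=D^{\circ}$ is a dominating $\mathcal{H}$-set of $G-uv=G_e-x$ (domination is inherited since the dominating edges avoid $x$, and $\langle D^{\circ},G-uv\rangle=\langle D^{\circ},G_e\rangle$), with $u,v\in R$ and $|R|\le\gamma_\mathcal{H}(G)$. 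To obtain $D^{\circ}$, run the cases on $D\cap\{x,u,v\}$. If $x\notin D$ and exactly one of $u,v$, say $u$, lies in $D$, then $D$ is itself a $\gamma_\mathcal{H}$-set of $G$, so by hypothesis it satisfies (ii), which forces $N[v,G]\cap D=\{u\}$; but then $v$ has no neighbour of $D$ in $G_e$ at all (its $G_e$-neighbours being $(N(v,G)\setminus\{u\})\cup\{x\}$ with $x\notin D$), contradicting that $D$ dominates $G_e$ — so $x\notin D$ already gives $u,v\in D$. If $x\in D$ and exactly one of $u,v$, say $u$, lies in $D$, then minimality of $D$ forces $(N(v,G)\setminus\{u\})\cap D=\emptyset$ (otherwise $D\setminus\{x\}$ would already dominate $G_e$), and $D':=(D\setminus\{x\})\cup\{v\}$ is then a minimum dominating $\mathcal{H}$-set with $x\notin D'$ and $u,v\in D'$, since $v$ is isolated in $\langle D',G_e\rangle=\langle D\setminus\{x\},G_e\rangle\cup K_1$.

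The main obstacle is the remaining case $x\in D$ with $u,v\notin D$ (so $x$ is isolated in $\langle D,G_e\rangle$), which I would show cannot occur, splitting on whether $u$, resp.\ $v$, has a neighbour of $D$ other than $x$. If both do, then $D\setminus\{x\}$ dominates $G_e-x=G-uv$, hence $G$, and is an $\mathcal{H}$-set of size $|D|-1$, giving $\gamma_\mathcal{H}(G)\le\gamma_\mathcal{H}(G_e)-1$, which is incompatible with $e$ not being $S^+$-critical. If only $u$ has such a neighbour $z$, then $(D\setminus\{x\})\cup\{v\}$ is a $\gamma_\mathcal{H}$-set of $G$ with $v$ in and $u$ out, so by hypothesis it satisfies (iii), i.e.\ $N[u,G]\cap((D\setminus\{x\})\cup\{v\})=\{v\}$, contradicting $z\in D\setminus\{x\}$. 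If neither does, then $(D\setminus\{x\})\cup\{u\}$ is a $\gamma_\mathcal{H}$-set of $G$ with $u$ in and $v$ out, hence satisfies (ii); the extra private neighbour $w\notin\{u,v\}$ of $u$ guaranteed by (ii) lies outside $D$ and keeps its $G$-neighbourhood in $G_e$, so $N(w,G)\cap D=\emptyset$ — yet $D$ must dominate $w$ in $G_e$, a contradiction. The part I expect to be most laborious is checking, in each of these reductions, that the swapped vertex sets remain $\mathcal{H}$-sets and remain dominating, which is where the two closure properties of $\mathcal{H}$ get used repeatedly.
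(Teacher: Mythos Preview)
Your proof is correct and follows essentially the same route as the paper's: the same case split on $M\cap\{u,v\}$ for the inequality and the structural claim, and the same case split on $D\cap\{x,u,v\}$ (with the same three subcases when $x\in D$, $u,v\notin D$) for the converse. The only omission is the trivial case $x\in D$ and $u,v\in D$, which is excluded by minimality of $D$ since $D\setminus\{x\}$ would still dominate $G_e$; otherwise your verification of the $\mathcal{H}$-property at each swap is in fact more explicit than the paper's.
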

\begin{proof}
Let  $x \in V(G_e)$ be the subdivision vertex and let $M$ be a $\gamma_\mathcal{H}$-set of $G$. 
If  $u, v \not \in M$ then $M \cup \{x\}$ is a  dominating $\mathcal{H}$-set of $G_e$ 
($\mathcal{H}$ is closed under union with $K_1$)  
and we have $\gamma_\mathcal{H} (G_e) \leq \gamma_\mathcal{H} (G) + 1$. 
If both $u$ and $v$ are in $M$ then $M$ is a
dominating $\mathcal{H}$-set of $G_e$($\mathcal{H}$ is hereditary) which implies  
$\gamma_\mathcal{H} (G_e) \leq \gamma_\mathcal{H} (G)$.
 If $u \in M$, $v \not\in M$ and $v \not \in pn_G[u,M]$ 
 then again $M$ is a  dominating $\mathcal{H}$-set of $G_e$ 
and hence $\gamma_\mathcal{H} (G_e) \leq \gamma_\mathcal{H} (G)$.
 So, let $u\in M$, $v \not\in M$ and $v \in pn_G[u,M]$. 
 If either $\{v\}$ or $\{u, v\}$ coincides with $pn_G[u,M]$ then 
 $(M - \{u\}) \cup \{x\}$ is a  dominating $\mathcal{H}$-set of $G_e$; 
 hence $\gamma_\mathcal{H} (G_e) \leq \gamma_\mathcal{H} (G)$.
 If neither $pn_G[u,M] = \{v\}$ nor $pn_G[u,M] = \{u, v\}$ then 
 $M \cup \{v\}$ is a  dominating $\mathcal{H}$-set of $G_e$ 
and we have $\gamma_\mathcal{H} (G_e) \leq \gamma_\mathcal{H} (G) + 1$.
 Thus $\gamma_\mathcal{H} (G_e) \leq \gamma_\mathcal{H} (G) +1$ and if the equality is fulfilled then one of (i), (ii) and (iii) holds.

Now, let for each $\gamma_\mathcal{H}$-set $M$ of $G$ one of (i), (ii) and (iii) hold.   
Assume $\gamma_\mathcal{H} (G_e) \leq \gamma_\mathcal{H} (G)$ and let  $R$ be a $\gamma_\mathcal{H}$-set of $G_e$.  

{\it Case} 1: $u, v \not \in R$. Hence $x \in R$. 
If $u, v \not \in pn_{G_e}[x, R]$ then $R - \{x\}$ is a dominating $\mathcal{H}$-set of $G$,
 a contradiction with $\gamma_\mathcal{H} (G_e) \leq \gamma_\mathcal{H} (G)$. 
If  $u \in pn_{G_e}[x, R]$ and $v \not \in pn_{G_e}[x, R]$  then $R_1 = (R-\{x\}) \cup \{u\}$
 is a dominating $\mathcal{H}$-set of $G$ of cardinality $|R_1| = |R| = \gamma_\mathcal{H} (G_e)$. 
 Since $\gamma_\mathcal{H} (G_e) \leq \gamma_\mathcal{H} (G)$, 
we have that $R_1$ is a $\gamma_\mathcal{H}$-set of $G$. 
 But then $u \in R_1$, $v \not \in R_1$ and  $v \not \in pn_G[u, R_1]$, contradicting (ii). 
 If $u, v \in pn_G[x, R]$ then as above $R_1$ is a $\gamma_\mathcal{H}$-set of $G$  
 and since $u \in R_1$ and  $\{u, v\} = pn_G[u, R_1]$, again we arrive to a contradiction with (ii). 
 
{\it Case} 2: $u \in R$ and $v \not \in R$. Hence $x \not \in R$, otherwise 
              $R - \{x\}$ is a dominating $\mathcal{H}$-set of $G$, contradicting   
              $\gamma_\mathcal{H} (G_e) \leq \gamma_\mathcal{H} (G)$.
              This implies that $R$ is a $\gamma_\mathcal{H}$-set  of $G$,
               $u \in R$ and   $v \not \in pn_G[u, R]$, a contradiction with (ii). 

{\it Case} 3: $u, v \in R$. Hence $R$ is a dominating $\mathcal{H}$-set of $G-uv$ 
              and $|R| = \gamma_\mathcal{H} (G_e) \leq \gamma_\mathcal{H} (G)$.
\end{proof}

When we restrict our attention to the case where $\mathcal{H} = \mathcal{I}$, 
 we can describe more precisely when an edge of a graph $G$ is $\gamma$-S$^+$-critical.

\begin{corollary}
Let $G$ be a graph and $e = uv \in E(G)$. 
 Then $e$ is  a $\gamma$-S$^+$-critical edge of $G$ 
 if and only if for each $\gamma$-set $M$ of $G$  one of (i), (ii) and (iii) stated in Theorem \ref{plus1} holds.
\end{corollary}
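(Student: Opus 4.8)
**The plan is to prove the corollary by specializing Theorem~\ref{plus1} to $\mathcal{H} = \mathcal{I}$ and closing the remaining gap.** The property $\mathcal{I}$ (all graphs) is trivially hereditary and closed under union with $K_1$, so Theorem~\ref{plus1} applies directly, and it already gives one direction essentially for free: if $e$ is $\gamma$-$S^+$-critical then every $\gamma$-set $M$ of $G$ satisfies one of (i), (ii), (iii). So the whole content of the corollary is the converse: assuming every $\gamma$-set of $G$ satisfies one of (i)--(iii), I must show $\gamma(G_e) > \gamma(G)$.

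For the converse I would argue by contraposition, using the last clause of Theorem~\ref{plus1}. Suppose $e$ is \emph{not} $\gamma$-$S^+$-critical; since $\gamma(G_e) \le \gamma(G)+1$ always, this means $\gamma(G_e) \le \gamma(G)$. If, toward a contradiction, every $\gamma$-set of $G$ satisfied one of (i)--(iii), then the final implication of Theorem~\ref{plus1} would hand me a dominating set $R$ of $G-uv$ with $u,v \in R$ and $|R| \le \gamma(G)$. The key observation specific to $\mathcal{H}=\mathcal{I}$ is that the $\mathcal{I}$-set condition is vacuous, so a dominating $\mathcal{I}$-set is just an ordinary dominating set; hence adding an edge back cannot hurt: since $u,v \in R$, the set $R$ is also a dominating set of $G$ itself (every vertex dominated in $G-uv$ is still dominated in $G$, the extra edge $uv$ only helping). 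Thus $R$ is a dominating set of $G$ with $u,v\in R$ and $|R|\le\gamma(G)$, so $R$ is in fact a $\gamma$-set of $G$ in which both $u$ and $v$ lie; but none of (i), (ii), (iii) can hold for such a set (each of them forces at least one of $u,v$ to lie outside the set), contradicting our assumption. Therefore some $\gamma$-set of $G$ violates all of (i)--(iii), completing the contrapositive.

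**I expect the only subtle point — and it is minor — to be the bookkeeping about the last clause of Theorem~\ref{plus1}.** That clause is phrased with hypothesis ``$e$ is not $\gamma_{\mathcal{P}}$-$S^+$-critical,'' which is exactly the case I am in, so I can invoke it verbatim with $\mathcal{P}=\mathcal{H}=\mathcal{I}$; I just need to note explicitly that $\mathcal{I}$ is hereditary and closed under union with $K_1$ so the theorem is applicable, and that for $\mathcal{I}$ being a ``dominating $\mathcal{I}$-set'' is the same as being a dominating set. With those two remarks the argument is a short syllogism and there is no real obstacle — the heavy lifting was already done in Theorem~\ref{plus1}. I would write the proof as: ($\Rightarrow$) cite the middle part of Theorem~\ref{plus1}; ($\Leftarrow$) contrapositive via the last part of Theorem~\ref{plus1} plus the observation that $u,v\in R$ makes $R$ dominate $G$, yielding a $\gamma$-set containing both endpoints and contradicting (i)--(iii).
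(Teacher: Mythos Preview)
Your proposal is correct and follows essentially the same route as the paper. Both directions invoke Theorem~\ref{plus1}; for the converse, the paper phrases the key step as the well-known inequality $\gamma(G-e)\ge\gamma(G)$, while you phrase it as ``a dominating set of $G-uv$ is a dominating set of $G$'' --- these are the same observation, and both lead to a $\gamma$-set of $G$ containing both $u$ and $v$, contradicting (i)--(iii).
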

\begin{proof}
{\it Necessity}: 
 The result immediately follows by Theorem \ref{plus1}.

 {\it Sufficiency}: 
Assume $\gamma (G_e) \leq \gamma (G)$. 
Then by Theorem \ref{plus1},  there is a dominating set $R$ of $G-uv$ with $u, v \in R$ and 
$|R| \leq \gamma(G)$. But it is well known fact that if $f$ is an edge of a graph $G$ then always $\gamma (G-f) \geq \gamma (G)$. 
Hence $R$ is a $\gamma$-set of both $G$ and  $G-e$ and  $u, v \in R$,  contradicting all (i), (ii) and (iii).
\end{proof}

\begin{theorem} \label{minus}
Let $\mathcal{H} \subseteq \mathcal{I}$ be induced-hereditary and closed under union with $K_1$. 
An edge  $e$ of a graph $G$ is $\gamma_{\mathcal{H}}$-S$^-$-critical if and only if 
$e$ is $\gamma_{\mathcal{H}}$-$ER^-$-critical.  
\end{theorem}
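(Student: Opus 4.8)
The plan is to prove both implications by relating a dominating $\mathcal{H}$-set of $G_e$ containing the subdivision vertex $x$ to a dominating $\mathcal{H}$-set of $G-e$, and conversely. Let $x$ denote the subdivision vertex of $G_e$, so that in $G_e$ the path $u,x,v$ replaces the edge $uv$. The key structural observation is that $G-e$ sits inside $G_e$ as the induced subgraph on $V(G_e)-\{x\}$, and the neighbourhoods of all vertices other than $u,v,x$ are identical in $G$, $G-e$, and $G_e$.

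First I would prove that $\gamma_{\mathcal{H}}\text{-}S^-$-critical implies $\gamma_{\mathcal{H}}\text{-}ER^-$-critical. Suppose $\gamma_{\mathcal{H}}(G_e) < \gamma_{\mathcal{H}}(G)$ and let $R$ be a $\gamma_{\mathcal{H}}$-set of $G_e$. If $x \notin R$, then $R$ dominates every vertex of $G$ except possibly that $u$ and $v$ might lose a dominator; but since $x$ is dominated, at least one of $u,v$ lies in $R$, and in fact $R$ remains a dominating $\mathcal{H}$-set of $G$ (the edge $uv$ of $G$ is not needed because any vertex dominated via $uv$ in $G$ is $u$ or $v$, and both are handled), contradicting $\gamma_{\mathcal{H}}(G_e)<\gamma_{\mathcal{H}}(G)$. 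Hence $x \in R$. Now replace $x$ by $u$: the set $R' = (R - \{x\}) \cup \{u\}$ has $|R'| \le |R|$, induces a graph obtained from $\langle R, G_e\rangle$ by a move that, since $\mathcal{H}$ is induced-hereditary and closed under union with $K_1$, still has property $\mathcal{H}$ — here I would argue carefully that $\langle R', G-e\rangle$ is isomorphic to an induced subgraph of $\langle R,G_e\rangle$ possibly together with isolated-vertex adjustments, invoking the two closure hypotheses. Moreover $R'$ dominates $G-e$: every original vertex of $G-e$ other than $v$ is dominated exactly as in $G_e$, and $v$ is dominated by $u \in R'$ via the edge $uv$... which is absent in $G-e$. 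So instead I would split on whether $v$ was a private neighbour of $x$ in $R$; if $v \notin pn_{G_e}[x,R]$ then $v$ already has another dominator and $R'$ works directly in $G-e$, giving $\gamma_{\mathcal{H}}(G-e) \le |R| < \gamma_{\mathcal{H}}(G)$; if $v \in pn_{G_e}[x,R]$ then also move symmetrically and put both $u$ and $v$ in, i.e.\ use $R'' = (R-\{x\}) \cup \{u,v\}$ of size $\le |R|+1 \le \gamma_{\mathcal{H}}(G)$, which is a dominating $\mathcal{H}$-set of $G-e$ (the added isolated-type vertices are absorbed by the closure hypothesis), so $\gamma_{\mathcal{H}}(G-e) \le \gamma_{\mathcal{H}}(G)$; but $\gamma_{\mathcal{H}}(G-e) \ge \gamma_{\mathcal{H}}(G)$ is false in general for arbitrary $\mathcal{P}$, so here I must instead conclude the strict inequality, which forces me to track cardinalities more tightly — this is the delicate point.

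For the converse, suppose $e$ is $\gamma_{\mathcal{H}}\text{-}ER^-$-critical, i.e.\ $\gamma_{\mathcal{H}}(G-e) < \gamma_{\mathcal{H}}(G)$, and let $D$ be a $\gamma_{\mathcal{H}}$-set of $G-e$. Since $D$ is not a dominating $\mathcal{H}$-set of $G$ (else $\gamma_{\mathcal{H}}(G) \le |D| < \gamma_{\mathcal{H}}(G)$), and $D$ being an $\mathcal{H}$-set in $G-e$ is also an $\mathcal{H}$-set in $G$ by induced-heredity, the only failure is domination: there is a vertex, necessarily $u$ or $v$, not dominated by $D$ in $G$ — say $u \notin D$ and $N[u,G] \cap D = \emptyset$, which means $D$ dominates $u$ in $G-e$ only through... impossible, so actually $u$ is undominated in $G-e$ too, contradiction; the real situation is $v \in D$ with $v$ the sole dominator of $u$ via $uv$ in $G$, so $u \notin D$ and $N[u,G-e]\cap D = \emptyset$ — again impossible. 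The correct resolution: $D$ fails to dominate $G$ means some vertex $w \notin D$ has $N[w,G]\cap D = \emptyset$; since $D$ dominates $G-e$, $w \in \{u,v\}$ and $w$'s only $G$-neighbour in $D$ would have been along $e$, so $D$ dominates $w$ in $G-e$ via $e$ — contradiction since $e \notin E(G-e)$. Hence $D$ does dominate $G$, contradiction. This tells me $D$ must actually not be an $\mathcal{H}$-set issue but rather $|D| < \gamma_{\mathcal{H}}(G)$ already yields a dominating $\mathcal{H}$-set of $G$ of smaller size unless $D$ genuinely fails domination — so the only escape is $\{u,v\}\cap D$ has exactly one element, say $u \in D$, $v \notin D$, $N[v,G-e]\cap D = \emptyset$. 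Then in $G_e$ put $D \cup \{x\}$: it dominates $G_e$ (x dominates $v$ and itself, $u \in D$ dominates $x$... fine) and is an $\mathcal{H}$-set by closure under union with $K_1$ since $x$ is adjacent in $\langle D\cup\{x\}, G_e\rangle$ only to $u$ — wait, that is not isolated, so I would instead use $(D \cup \{x\}) - \{u\}$ if $u$'s only private role was $x$, or argue directly; in any case $\gamma_{\mathcal{H}}(G_e) \le |D|+1 \le \gamma_{\mathcal{H}}(G)$, and pushing to show it is $< \gamma_{\mathcal{H}}(G)$ is the symmetric delicate step.

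The main obstacle, clearly, is that unlike ordinary domination we do not have $\gamma_{\mathcal{H}}(G-e) \ge \gamma_{\mathcal{H}}(G)$ for free, so in both directions I cannot simply exhibit a dominating $\mathcal{H}$-set of the modified graph of size $\le \gamma_{\mathcal{H}}(G)$ and declare victory — I must exhibit one of size strictly less, which requires carefully choosing whether to add $x$, to swap $x$ for $u$, or to include both $u$ and $v$, and bounding cardinalities exactly rather than up to $+1$. I expect the cleanest route is: handle the forward direction by taking an optimal $R \ni x$ in $G_e$ of size $\gamma_{\mathcal{H}}(G)-1$ or less and producing a dominating $\mathcal{H}$-set of $G-e$ of the same size by the swap $x \mapsto u$ together with, if needed, adding $v$ while deleting a now-redundant vertex; and handle the reverse direction by taking an optimal $D$ in $G-e$ that fails to dominate $G$ precisely at one endpoint of $e$, and building $D' = (D \setminus \{z\}) \cup \{x\}$ for a suitable redundant $z$ (or $D \cup \{x\}$ minus an endpoint), checking the $\mathcal{H}$-property via the two closure hypotheses and the fact that $x$ has degree at most $2$ in any induced subgraph of $G_e$. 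Throughout, the property-preservation claims reduce to: an induced subgraph of an $\mathcal{H}$-graph is $\mathcal{H}$ (induced-heredity), and an $\mathcal{H}$-graph plus a disjoint or pendant-to-nothing $K_1$ is $\mathcal{H}$ (closure under union with $K_1$) — for the pendant case when $x$ is adjacent to one vertex I would note $\langle (D\setminus\{z\})\cup\{x\}, G-e\rangle$ can be arranged so $x$ is isolated, or else absorb it into a deletion. I would finish by noting both implications close the biconditional.
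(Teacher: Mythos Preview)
Your proposal has a fundamental conceptual error in the reverse direction and an incomplete case analysis in the forward direction.

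\textbf{Reverse direction ($ER^- \Rightarrow S^-$).} You write that ``$D$ being an $\mathcal{H}$-set in $G-e$ is also an $\mathcal{H}$-set in $G$ by induced-heredity, [so] the only failure is domination.'' This is backwards. Induced-heredity lets you pass to induced \emph{subgraphs}, but $\langle D, G\rangle$ is a \emph{supergraph} of $\langle D, G-e\rangle$: it may contain the extra edge $uv$. Domination, on the other hand, can only get easier when an edge is added, so $D$ certainly dominates $G$. The obstruction to $D$ being a dominating $\mathcal{H}$-set of $G$ is precisely the $\mathcal{H}$-property, and it occurs exactly when $u,v\in D$. This is the key fact: every $\gamma_{\mathcal{H}}$-set of $G-e$ must contain both $u$ and $v$. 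Once you have that, the same set $D$ is a dominating $\mathcal{H}$-set of $G_e$ (the subdivision vertex is dominated by $u$ and $v$, and $\langle D, G_e\rangle = \langle D, G-e\rangle$ since $uv\notin E(G_e)$), giving $\gamma_{\mathcal{H}}(G_e)\le |D|<\gamma_{\mathcal{H}}(G)$ in one line. Your conclusion that ``$\{u,v\}\cap D$ has exactly one element'' is the opposite of what is true, and the ensuing construction with $D\cup\{x\}$ never recovers the strict inequality.

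\textbf{Forward direction ($S^- \Rightarrow ER^-$).} Your claim ``if $x\notin R$ then $R$ remains a dominating $\mathcal{H}$-set of $G$'' fails when both $u,v\in R$: then $\langle R,G\rangle$ contains the edge $uv$ absent from $\langle R,G_e\rangle$, and you cannot invoke induced-heredity. This subcase is not a contradiction at all --- it is exactly where you win, since $\langle R,G-e\rangle=\langle R,G_e\rangle$ and $R$ dominates $G-e$, yielding $\gamma_{\mathcal{H}}(G-e)\le |R|<\gamma_{\mathcal{H}}(G)$ directly. For the remaining case $x\in R$, the paper first observes (by minimality of $R$ and induced-heredity) that not both $u,v$ can lie in $R$, then shows that $R-\{x\}$ or $(R-\{x\})\cup\{y\}$ (for the private-neighbour endpoint $y$) is a dominating $\mathcal{H}$-set of $G$, a contradiction; the point is that the replacement vertex is \emph{isolated} in the induced subgraph (since it was a private neighbour of $x$), so closure under union with $K_1$ applies cleanly. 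Your swap $x\mapsto u$ with an ad hoc deletion does not control the induced subgraph, which is why you were unable to close the argument.
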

\begin{proof}
 As we have already know, $\mathcal{H}$ is nondegenerate and then all 
 $\gamma_\mathcal{H} (G-e)$, $\gamma_\mathcal{H} (G_e)$ 
and $\gamma_\mathcal{H} (G)$ exist. Let $v$ be the subdivision vertex of $G_e$. 

 {\it Sufficiency}:
 Let $e=xy$ be a $\gamma_{\mathcal{H}}$-$ER^-$-critical edge of $G$ 
 and $M$ a  $\gamma_\mathcal{H}$-set of $G-e$. Hence 
  $\gamma_\mathcal{H} (G -e) < \gamma_\mathcal{H} (G)$ and $x, y \in M$. 
  But then $M$ is a dominating $\mathcal{H}$-set of $G_e$ which leads to 
 $\gamma_\mathcal{H} (G_e) \leq \gamma_\mathcal{H} (G-e) < \gamma_\mathcal{H} (G)$.
 
{\it Necessity}:  
Let $e=xy$ be a $\gamma_{\mathcal{H}}$-S$^-$-critical edge of $G$ and 
$M$  a $\gamma_\mathcal{H}$-set of $G_e$. Hence $\gamma_\mathcal{H} (G_e) < \gamma_\mathcal{H} (G)$.   
  Assume $v \not \in M$. Hence at least one of $x$ and $y$ is in $M$. If both $x, y \in M$ then $M$ is a dominating $\mathcal{H}$-set of $G-e$ and the result follows. 
 If $x \not \in M$ and $y \in M$ then $M$ is a dominating $\mathcal{H}$-set of $G$, a contradiction. 
 Thus we may assume $v$ is in all $\gamma_\mathcal{H}$-sets of $G_e$.  
  Since $\mathcal{H}$ is induced-hereditary, at least one of $x$ and $y$ is not in $M$. 
  First let $x \in M$ and $y \not \in M$. Then $y \in pn_{G_e}[v,M]$ which implies 
  $M-\{v\}$ is a dominating $\mathcal{H}$-set of $G$ - a contradiction. Hence both $x$ and $y$ are not in $M$. 
  If $x, y \not \in pn_{G_e} [v,M]$ then $M - \{v\}$ is a dominating $\mathcal{H}$-set of $G$, a contradiction.
  Hence  at least one of $x$ and $y$, say $y$, is in $pn_{G_e}[v, M]$. 
	But then $(M - \{v\}) \cup \{y\}$ is a dominating $\mathcal{H}$-set of $G$, a contradiction. 
\end{proof}

Note that (a) there do not exist $\gamma$-$ER^-$-critical edges (see \cite{gm}), 
and (b) necessary and sufficient conditions for an edge of a graph $G$ 
to be $\gamma_{\mathcal{H}}$-$ER^-$-critical may be found in \cite{sam}. 
Now we define  the following classes of graphs:

$\bullet$ $(CS^-_{\mathcal{P}})$  \ 
          $\gamma_{\mathcal{P}}(G) >\gamma_{\mathcal{P}}(G_e)$
          for every edge $e$ of $G$, and 

$\bullet$ $(CER^-_{\mathcal{P}})$ \ 
          $\gamma_{\mathcal{P}}(G) > \gamma_{\mathcal{P}}(G-e)$ 
          for every edge $e$ of $G$.

As an immediate consequence of  Theorem \ref{minus} we obtain the next result. 

\begin{corollary}\label{coin}
If $\mathcal{H} \subseteq \mathcal{I}$ is induced-hereditary and closed under union with $K_1$ 
then the classes of graphs $(CS^-_{\mathcal{P}})$ and $(CER^-_{\mathcal{P}})$ coincide.
\end{corollary}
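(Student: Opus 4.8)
The plan is simply to unwind the definitions of the two graph classes and apply Theorem \ref{minus} edgewise. First recall that, by the very definitions given just before the corollary, a graph $G$ (with at least one edge) lies in $(CS^-_{\mathcal{H}})$ exactly when $\gamma_{\mathcal{H}}(G) > \gamma_{\mathcal{H}}(G_e)$ for \emph{every} edge $e \in E(G)$, i.e.\ exactly when every edge of $G$ is $\gamma_{\mathcal{H}}$-$S^-$-critical; and $G$ lies in $(CER^-_{\mathcal{H}})$ exactly when every edge of $G$ is $\gamma_{\mathcal{H}}$-$ER^-$-critical. Since $\mathcal{H}$ is induced-hereditary and closed under union with $K_1$, it is nondegenerate, so all of $\gamma_{\mathcal{H}}(G)$, $\gamma_{\mathcal{H}}(G_e)$ and $\gamma_{\mathcal{H}}(G-e)$ exist and the notions above are meaningful.

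Now fix an arbitrary graph $G$. For each individual edge $e$ of $G$, Theorem \ref{minus} gives the equivalence: $e$ is $\gamma_{\mathcal{H}}$-$S^-$-critical if and only if $e$ is $\gamma_{\mathcal{H}}$-$ER^-$-critical. Quantifying this biconditional over all $e \in E(G)$ yields that every edge of $G$ is $\gamma_{\mathcal{H}}$-$S^-$-critical if and only if every edge of $G$ is $\gamma_{\mathcal{H}}$-$ER^-$-critical. By the reformulations in the previous paragraph, this says precisely $G \in (CS^-_{\mathcal{H}})$ iff $G \in (CER^-_{\mathcal{H}})$, so the two classes coincide.

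There is essentially no obstacle: the only point requiring a moment's care is that Theorem \ref{minus} is an equivalence valid for \emph{each} edge (not merely an assertion about the existence of some critical edge), which is exactly what permits pushing the universal quantifier through the biconditional. I would, if desired, append the harmless remark that for $\mathcal{H} = \mathcal{I}$ both classes are empty, since (as noted after Theorem \ref{minus}) there are no $\gamma$-$ER^-$-critical edges at all.
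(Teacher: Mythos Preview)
Your proof is correct and is exactly the argument the paper intends: the corollary is stated as ``an immediate consequence of Theorem~\ref{minus}'', and your proposal simply spells out that immediate consequence by applying the edgewise equivalence of Theorem~\ref{minus} and quantifying over all edges. There is nothing to add; your optional closing remark about the case $\mathcal{H}=\mathcal{I}$ is also consistent with the paper's observations.
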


Note that the class $(CER^-_{\mathcal{P}})$ in the case when $\mathcal{P} = \mathcal{O}$ 
 was introduced by Grobler \cite{g}  and also considered in \cite{gm0,gm,cfm}.

\section{Multiple subdivision}

Recall that  $G_{e,t}$ denote the graph obtained from a graph $G$ 
by subdivision of the edge $e \in E(G)$ with $t$ vertices
(instead of edge $e = uv$ we put a path $(u,x_1,x_2,\ldots ,x_t,v)$). 
For any graph $G$ and any nondegenerate property $\mathcal{P}$ 
let us denote by $V^-_\mathcal{P}(G)$ the set 
$\{v \in V(G) \mid \gamma_{\mathcal{P}}(G-v) < \gamma_{\mathcal{P}}(G)\}$.
Qur first result shows that the value of the difference 
$ \gamma_{\mathcal{P}}(G_{e,3}) - \gamma_{\mathcal{P}}(G-e)$  can be either $0$ or $1$.

\begin{theorem}\label{multi1}
Let $\mathcal{H} \subseteq \mathcal{I}$ be induced-hereditary and closed under union with $K_1$. 
If $e=uv$ is an edge of a graph $G$ then  
$\gamma_{\mathcal{H}}(G-e) \leq \gamma_{\mathcal{H}}(G_{e,3}) \leq  \gamma_{\mathcal{H}}(G-e) + 1$. 
Moreover,  the following conditions are equivalent:
\begin{itemize}
\item[($\mathbb{A}_1$)] $\gamma_{\mathcal{H}}(G-e) = \gamma_{\mathcal{H}}(G_{e,3})$;
\item[($\mathbb{A}_2$)]
                        at least one of the following holds:
\begin{itemize}
\item[(i)] $u \in V^-_\mathcal{H}(G-e)$ and $v$ belongs to some $\gamma_\mathcal{H}$-set of $G-u$; 
\item[(ii)] $v \in V^-_\mathcal{H}(G-e)$ and $u$ belongs to some $\gamma_\mathcal{H}$-set of $G-v$.
\end{itemize}
\end{itemize}
If in addition $\mathcal{H}$ is hereditary then ($\mathbb{A}_1$)  and ($\mathbb{A}_2$)  are equivalent to
\begin{itemize}
\item[($\mathbb{A}_3$)] $\gamma_\mathcal{H}(G-e) = 1 + \gamma_\mathcal{H}(G)$.
\end{itemize}
\end{theorem}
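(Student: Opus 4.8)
The plan is to establish the two-sided inequality first, then the equivalence of $(\mathbb{A}_1)$ and $(\mathbb{A}_2)$, and finally to fold in $(\mathbb{A}_3)$ under the extra hereditary hypothesis. For the lower bound $\gamma_{\mathcal{H}}(G-e)\le\gamma_{\mathcal{H}}(G_{e,3})$, I would take a $\gamma_{\mathcal{H}}$-set $D$ of $G_{e,3}$ (where the $u$--$v$ edge has been replaced by a path $u,x_1,x_2,x_3,v$) and produce from it a dominating $\mathcal{H}$-set of $G-e$ of size at most $|D|$, by a case analysis on how $D$ meets $\{x_1,x_2,x_3\}$ and $\{u,v\}$: any internal path vertex in $D$ can be swapped for $u$ or $v$ (using induced-heredity to keep the property, since we only ever add isolated-type vertices or shrink), and one checks domination in $G-e$ is preserved. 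For the upper bound $\gamma_{\mathcal{H}}(G_{e,3})\le\gamma_{\mathcal{H}}(G-e)+1$, start from a $\gamma_{\mathcal{H}}$-set $D'$ of $G-e$ and exhibit $D'\cup\{x_2\}$ (or $D'\cup\{x_1\}$ etc., adjusting by which of $u,v$ lies in $D'$) as a dominating $\mathcal{H}$-set of $G_{e,3}$: here $x_2$ has both its $G_{e,3}$-neighbors $x_1,x_3$ still undominated unless we are careful, so the correct move is to insert the single vertex adjacent to whichever of $x_1,x_3$ would otherwise be orphaned; closure under union with $K_1$ guarantees the enlarged set is still a $\mathcal{P}$-set since the added vertex is isolated in the induced subgraph relative to the rest when chosen as the "middle-ish" vertex.

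For $(\mathbb{A}_1)\Rightarrow(\mathbb{A}_2)$: assume $\gamma_{\mathcal{H}}(G_{e,3})=\gamma_{\mathcal{H}}(G-e)=:k$, and let $D$ be a $\gamma_{\mathcal{H}}$-set of $G_{e,3}$. The three subdivision vertices $x_1,x_2,x_3$ force at least one of them into $D$ (a path $P_3$ cannot be dominated from outside by fewer than... actually $x_1,x_3$ have their only external neighbors $u,v$), so analyze: if $D$ contains no $x_i$ then $u,v\in D$ and $D$ dominates $G-e$ with equality forcing... this reduces $k$ by... — rather, the productive case is $x_1\in D$ or $x_3\in D$, say $x_1\in D$; then $D\setminus\{x_1\}$ restricted suitably is a dominating $\mathcal{H}$-set of $(G-e)-u$ of size $k-1$, so $u\in V^-_{\mathcal{H}}(G-e)$, and tracking where $v$ sits yields that $v$ lies in a $\gamma_{\mathcal{H}}$-set of $G-u$ (this last step uses that $G_{e,3}-u$ deformation retracts onto $G-u$ through the now-pendant path). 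For $(\mathbb{A}_2)\Rightarrow(\mathbb{A}_1)$: given, say, (i), take a $\gamma_{\mathcal{H}}$-set $A$ of $(G-e)-u$ with $|A|=\gamma_{\mathcal{H}}(G-e)-1$ and a $\gamma_{\mathcal{H}}$-set $B$ of $G-u$ containing $v$; combine $A$ (to handle everything away from $u$) with one well-placed path vertex to cover $u$ and the path, using $v\in B$ to dominate $x_3$ — the goal is a dominating $\mathcal{H}$-set of $G_{e,3}$ of size $\gamma_{\mathcal{H}}(G-e)$, which with the already-proved lower bound gives equality.

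Finally, assuming $\mathcal{H}$ hereditary, I would show $(\mathbb{A}_3)\Leftrightarrow(\mathbb{A}_1)$. Since $\mathcal{H}$ is hereditary, deleting the edge $e$ from any induced $\mathcal{P}$-subgraph keeps it a $\mathcal{P}$-subgraph, and the standard fact (cf.\ the reasoning in Theorem \ref{minus}) gives $\gamma_{\mathcal{H}}(G)\le\gamma_{\mathcal{H}}(G-e)\le\gamma_{\mathcal{H}}(G)+1$; combined with Theorem \ref{minus} / the $\gamma_{\mathcal{H}}$-$ER^-$ analysis, $\gamma_{\mathcal{H}}(G-e)$ is either $\gamma_{\mathcal{H}}(G)$ or $\gamma_{\mathcal{H}}(G)+1$. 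I would then argue that $(\mathbb{A}_1)$ holds precisely in the latter case: if $\gamma_{\mathcal{H}}(G-e)=\gamma_{\mathcal{H}}(G)+1$, one checks $\gamma_{\mathcal{H}}(G_{e,3})$ also equals $\gamma_{\mathcal{H}}(G)+1$ (subdividing three times can be "undone" by a dominating set that treats the long path cheaply, matching $G-e$), giving $(\mathbb{A}_1)$; conversely if $\gamma_{\mathcal{H}}(G-e)=\gamma_{\mathcal{H}}(G)$ then $G-e$ has a $\gamma_{\mathcal{H}}$-set which is also a dominating $\mathcal{H}$-set of $G$, and a short argument produces a dominating $\mathcal{H}$-set of $G_{e,3}$ strictly larger than $\gamma_{\mathcal{H}}(G-e)$ is unavoidable only when... — more cleanly, pair this with part (b) of the abstract ($\gamma_{\mathcal{H}}(G_{e,1})<\gamma_{\mathcal{H}}(G)\iff\gamma_{\mathcal{H}}(G-e)<\gamma_{\mathcal{H}}(G)$, which cannot happen as no $\gamma$-$ER^-$ strictness... wait, it can for general $\mathcal{H}$) to pin down the value. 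I expect the main obstacle to be the bookkeeping in the $(\mathbb{A}_1)\Rightarrow(\mathbb{A}_2)$ direction: correctly transferring a $\gamma_{\mathcal{H}}$-set of $G_{e,3}$ to simultaneously witness membership in $V^-_{\mathcal{H}}(G-e)$ and a $\gamma_{\mathcal{H}}$-set of $G-u$, while only invoking induced-heredity (not heredity) and closure under $\cup K_1$, since the two statements in $(\mathbb{A}_2)$ refer to different graphs ($G-e$ versus $G$) and one must be careful that the path-vertex swaps respect the property in both.
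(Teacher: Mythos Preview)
Your outline for the double inequality and for $(\mathbb{A}_1)\Leftrightarrow(\mathbb{A}_2)$ is essentially the paper's approach: case analysis on $S=\widetilde{D}\cap\{x_1,x_2,x_3\}$ for the lower bound, and $D'\cup\{x_2\}$ (which always works, no adjustment needed) for the upper bound. One simplification you are missing in $(\mathbb{A}_2)\Rightarrow(\mathbb{A}_1)$: since $e=uv$, the graphs $(G-e)-u$ and $G-u$ are identical, so your sets $A$ and $B$ live in the same graph and have the same size $\gamma_{\mathcal{H}}(G-e)-1$; you only need the single set $B$ (a $\gamma_{\mathcal{H}}$-set of $G-u$ containing $v$), and then $B\cup\{x_1\}$ is the desired dominating $\mathcal{H}$-set of $G_{e,3}$. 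The paper makes this step clean by invoking Theorem~\ref{a} to conclude that $B\cup\{u\}$ is a $\gamma_{\mathcal{H}}$-set of $G-e$ with $pn_{G-e}[u,B\cup\{u\}]=\{u\}$.

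Your treatment of $(\mathbb{A}_3)$, however, contains a genuine error. You assert that for hereditary $\mathcal{H}$ one has $\gamma_{\mathcal{H}}(G)\le\gamma_{\mathcal{H}}(G-e)$, but this is false: removing an edge can strictly \emph{decrease} $\gamma_{\mathcal{H}}$ even when $\mathcal{H}$ is hereditary (take $\mathcal{H}=\mathcal{O}$ and $G=K_{3,n_2,\dots,n_m}$ with all $n_i\ge3$, where $i(G)=3$ but $i(G-e)=2$ for every edge $e$; this is exactly Example~\ref{+1}). Heredity tells you that deleting the edge from an induced $\mathcal{H}$-subgraph preserves the property, but that goes the wrong way for the inequality you want: a $\gamma_{\mathcal{H}}$-set of $G$ need not dominate $G-e$, and a $\gamma_{\mathcal{H}}$-set of $G-e$ need not be an $\mathcal{H}$-set in $G$. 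Consequently your dichotomy ``$\gamma_{\mathcal{H}}(G-e)$ is either $\gamma_{\mathcal{H}}(G)$ or $\gamma_{\mathcal{H}}(G)+1$'' is unfounded, and the rest of that paragraph does not go through. The paper bypasses all of this: it proves $(\mathbb{A}_2)\Leftrightarrow(\mathbb{A}_3)$ in one line by citing Theorem~\ref{edgeadd} (from \cite{sam}), which states precisely that, for hereditary $\mathcal{H}$ closed under union with $K_1$, one has $\gamma_{\mathcal{H}}(G-e)=\gamma_{\mathcal{H}}(G)+1$ if and only if condition (i) or (ii) of $(\mathbb{A}_2)$ holds. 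You should invoke that external result rather than attempt a direct argument based on an inequality that does not hold.
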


The main result in this section is the following.

\begin{theorem}\label{multi4}
Let $e$ be an edge of a graph $G$ and let 
 $\mathcal{H} \subseteq \mathcal{I}$ be hereditary and closed under union with $K_1$.
 \begin{itemize}
 \item[(i)] Then $\gamma_\mathcal{H}(G) = \gamma_\mathcal{H}(G_{e,3})$ if and only if 
 $\gamma_\mathcal{H}(G) = \gamma_\mathcal{H}(G-e) +1$.
  \item[(ii)] If $\gamma_\mathcal{H}(G) = \gamma_\mathcal{H}(G-e) +1$ then 
  $msd_\mathcal{H}(e) = msd^-_\mathcal{H}(e) = 1$, $msd^+_\mathcal{H}(e) = 6$ and 
$\gamma_\mathcal{H}(G) = \gamma_\mathcal{H}(G_{e,1}) +1 = \gamma_\mathcal{H}(G_{e,2}) +1 
 = \gamma_\mathcal{H}(G_{e,3}) =  \gamma_\mathcal{H}(G_{e,4})  =  \gamma_\mathcal{H}(G_{e,5}) = \gamma_\mathcal{H}(G_{e,6}) - 1.$
 \item[(iii)] Then $msd_{\mathcal{H}}(e) \leq 3$.  In particular,
   (Dettlaff,  Raczek and  Topp \cite{drt} when   $\mathcal{H} = \mathcal{I}$) 
		 $msd_{\mathcal{H}}(G) \leq 3$. 
  \end{itemize} 
\end{theorem}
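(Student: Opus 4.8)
The plan is to obtain (i) directly from Theorem~\ref{multi1}, to prove (ii) by an explicit construction for the upper bounds combined with a path-sliding analysis for the lower bounds, and to deduce (iii) from (i) together with Theorem~\ref{minus}. We start with (i). Since $\mathcal{H}$ is hereditary, Theorem~\ref{multi1} supplies both $\gamma_{\mathcal{H}}(G-e)\le\gamma_{\mathcal{H}}(G_{e,3})\le\gamma_{\mathcal{H}}(G-e)+1$ and the equivalence ($\mathbb{A}_1$) $\Leftrightarrow$ ($\mathbb{A}_3$). If $\gamma_{\mathcal{H}}(G)=\gamma_{\mathcal{H}}(G-e)+1$, then ($\mathbb{A}_3$) fails, hence ($\mathbb{A}_1$) fails, and the two-sided inequality forces $\gamma_{\mathcal{H}}(G_{e,3})=\gamma_{\mathcal{H}}(G-e)+1=\gamma_{\mathcal{H}}(G)$. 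Conversely, if $\gamma_{\mathcal{H}}(G_{e,3})=\gamma_{\mathcal{H}}(G)$, the same inequality gives $\gamma_{\mathcal{H}}(G-e)\in\{\gamma_{\mathcal{H}}(G)-1,\gamma_{\mathcal{H}}(G)\}$; the value $\gamma_{\mathcal{H}}(G)$ would make ($\mathbb{A}_1$), and thus ($\mathbb{A}_3$), hold, giving $\gamma_{\mathcal{H}}(G-e)=\gamma_{\mathcal{H}}(G)+1$, a contradiction, so $\gamma_{\mathcal{H}}(G-e)=\gamma_{\mathcal{H}}(G)-1$. This proves (i).

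For (ii), set $k=\gamma_{\mathcal{H}}(G-e)$, so the hypothesis reads $\gamma_{\mathcal{H}}(G)=k+1$ and $e$ is $\gamma_{\mathcal{H}}$-$ER^-$-critical. The first step is the observation that every $\gamma_{\mathcal{H}}$-set $D$ of $G-e$ contains both ends $u,v$ of $e$: otherwise $\langle D,G\rangle=\langle D,G-e\rangle\in\mathcal{H}$ and $D$ dominates $G$, giving $\gamma_{\mathcal{H}}(G)\le k$, which is impossible. For the upper bounds, fix such a $D$ and, in $G_{e,t}$ with $e$ replaced by the path $(u,x_1,\ldots,x_t,v)$, adjoin to $D$ a minimum independent dominating set $S$ of the subpath induced by $\{x_2,\ldots,x_{t-1}\}$ (so $S=\emptyset$ when $t\le 2$). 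Since $u,v\in D$ dominate $x_1$ and $x_t$, and no $x_i$ with $2\le i\le t-1$ is adjacent to any vertex of $V(G)$, the set $D\cup S$ dominates $G_{e,t}$, and $\langle D\cup S,G_{e,t}\rangle=\langle D,G-e\rangle\cup|S|K_1\in\mathcal{H}$ because $\mathcal{H}$ is closed under union with $K_1$; hence $\gamma_{\mathcal{H}}(G_{e,t})\le k+\lceil(t-2)/3\rceil$, which is $k$ for $t\in\{1,2\}$, $k+1$ for $t\in\{3,4,5\}$, and $k+2$ for $t=6$. For the reverse inequalities, $t=3$ is part (i), giving $\gamma_{\mathcal{H}}(G_{e,3})=k+1$; and for $t=1$, $e$ is $\gamma_{\mathcal{H}}$-$S^-$-critical by Theorem~\ref{minus}, so the analysis inside the proof of Theorem~\ref{minus} produces a $\gamma_{\mathcal{H}}$-set of $G_{e}$ that is at the same time a dominating $\mathcal{H}$-set of $G-e$, whence $\gamma_{\mathcal{H}}(G_{e,1})\ge k$ and therefore $\gamma_{\mathcal{H}}(G_{e,1})=k$.

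The reverse inequalities for $t\in\{2,4,5,6\}$ are the technical heart. The uniform idea: starting from a minimum dominating $\mathcal{H}$-set $R$ of $G_{e,t}$, slide the vertices of $R$ that lie strictly inside the subdivision path toward its ends (swapping a path vertex adjacent to $u$ or $v$ for that endpoint), arriving at a dominating $\mathcal{H}$-set no larger than $R$ whose trace on the path is controlled, and then compare it with $G-e$ or with $G$. Alternatively, for $t\ge 4$ one may argue inductively: writing $G_{e,t}=(G_{e,t-3})_{f,3}$ for the edge $f$ joining $u$ to the adjacent subdivision vertex of $G_{e,t-3}$, Theorem~\ref{multi1} applied to $G_{e,t-3}$ and $f$ reduces the problem to evaluating $\gamma_{\mathcal{H}}$ of $(G_{e,t-3})-f$, which is $G-e$ with a pendant path on $t-3\le 3$ vertices attached at $v$. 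Either way, I expect the \emph{main obstacle} to be the same point: a slide or swap alters the induced subgraph only by deleting or inserting isolated vertices — harmless since $\mathcal{H}$ is closed under union with $K_1$ — \emph{except} in the corner cases where $R$ contains $u$, $v$ and an interior path vertex simultaneously (equivalently, where a $\gamma_{\mathcal{H}}$-set of the pendant-path graph avoids $v$), since one then cannot reinstate the edge $e$ without risking leaving $\mathcal{H}$; these cases have to be resolved by a finer case distinction keyed to whether $u$, respectively $v$, has a neighbour in the set, in the spirit of Theorems~\ref{plus1} and~\ref{minus}. Granting the values $\gamma_{\mathcal{H}}(G_{e,1})=\gamma_{\mathcal{H}}(G_{e,2})=k$, $\gamma_{\mathcal{H}}(G_{e,3})=\gamma_{\mathcal{H}}(G_{e,4})=\gamma_{\mathcal{H}}(G_{e,5})=k+1$, $\gamma_{\mathcal{H}}(G_{e,6})=k+2$, the displayed chain of equalities is a restatement of them; moreover $msd^-_{\mathcal{H}}(e)=1$ because $\gamma_{\mathcal{H}}(G_{e,1})=k<\gamma_{\mathcal{H}}(G)$, $msd^+_{\mathcal{H}}(e)=6$ because $\gamma_{\mathcal{H}}(G_{e,t})\le\gamma_{\mathcal{H}}(G)$ for $t\le 5$ while $\gamma_{\mathcal{H}}(G_{e,6})=\gamma_{\mathcal{H}}(G)+1$, and $msd_{\mathcal{H}}(e)=\min\{msd^+_{\mathcal{H}}(e),msd^-_{\mathcal{H}}(e)\}=1$.

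Finally, (iii) follows quickly. Let $e$ be any edge of $G$. If $\gamma_{\mathcal{H}}(G)=\gamma_{\mathcal{H}}(G-e)+1$, then $msd_{\mathcal{H}}(e)=1$ by (ii); independently of the delicate part of (ii) this already holds because $e$ is then $\gamma_{\mathcal{H}}$-$ER^-$-critical, hence $\gamma_{\mathcal{H}}$-$S^-$-critical by Theorem~\ref{minus}, so $\gamma_{\mathcal{H}}(G_{e,1})<\gamma_{\mathcal{H}}(G)$. If $\gamma_{\mathcal{H}}(G)\ne\gamma_{\mathcal{H}}(G-e)+1$, then by (i) $\gamma_{\mathcal{H}}(G_{e,3})\ne\gamma_{\mathcal{H}}(G)$, so $msd_{\mathcal{H}}(e)\le 3$. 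In every case $msd_{\mathcal{H}}(e)\le 3$, and since $G$ has at least one edge we conclude $msd_{\mathcal{H}}(G)=\min\{msd_{\mathcal{H}}(e)\mid e\in E(G)\}\le 3$, which for $\mathcal{H}=\mathcal{I}$ recovers the bound of Dettlaff, Raczek and Topp.
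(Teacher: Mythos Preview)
Your arguments for (i) and (iii) are correct and coincide with the paper's; in fact your (iii) is slightly cleaner, since you observe that the case $\gamma_{\mathcal{H}}(G)=\gamma_{\mathcal{H}}(G-e)+1$ already gives $msd_{\mathcal{H}}(e)=1$ via Theorem~\ref{minus}, without needing the full strength of (ii).

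For (ii), however, there is a genuine gap. Your upper bounds are fine and match the paper's, but the lower bounds for $t\in\{2,4,5,6\}$ are left as a sketch: you describe a path-sliding procedure, correctly identify that replacing $x_1$ by $u$ (or $x_t$ by $v$) can introduce new edges in the induced subgraph and hence need not preserve membership in $\mathcal{H}$, and then proceed with ``Granting the values\ldots''. That obstacle is real, and neither the sliding nor the inductive reduction $G_{e,t}=(G_{e,t-3})_{f,3}$ is carried to a conclusion.

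The paper avoids this difficulty by a different device. Using Theorems~\ref{a} and~\ref{+1l} it first proves the uniform bound
\[
\min\{\gamma_{\mathcal{H}}(G-u),\ \gamma_{\mathcal{H}}(G-v),\ \gamma_{\mathcal{H}}(G-\{u,v\})\}\ \ge\ \gamma_{\mathcal{H}}(G-e).
\]
Then, for a $\gamma_{\mathcal{H}}$-set $D_t$ of $G_{e,t}$ with $U_t=D_t\cap\{x_1,\dots,x_t\}$, one simply \emph{deletes} all of $U_t$. Since the $x_i$ can only dominate $u$, $v$, or other $x_j$, the set $D_t\setminus U_t$ is a dominating $\mathcal{H}$-set of one of $G-e$, $G-u$, $G-v$, $G-\{u,v\}$ (and $\mathcal{H}$-membership is preserved because we are passing to an induced subgraph). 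The displayed inequality then gives $|D_t|\ge \gamma_{\mathcal{H}}(G-e)+|U_t|$, and the obvious lower bounds $|U_t|\ge 0,1,2$ for $t\le 2$, $t\in\{4,5\}$, $t=6$ (together with a short contradiction ruling out $U_t\ne\emptyset$ when $t\le 2$) finish the job. This sidesteps entirely the swap that troubled you: no vertex is ever added to the set, so heredity suffices.
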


\begin{example}\label{+1}
It is easy to see that
 if $G = K_{3n_2..n_m}$, where $m \geq 2$ and $n_i \geq 3$ for $2 \leq i \leq m$,
          then $\gamma_{\mathcal{O}}(G) = \gamma_\mathcal{O}(G_{e,3}) = \gamma_\mathcal{O}(G-e) +1 = 3$ 
          for every edge $e$ of $G$.         
	Hence by Theorem \ref{multi4}, 	$msd_{\mathcal{O}}(G) = msd_{\mathcal{O}}^-(G) =1$	and $msd_{\mathcal{O}}^+(G) =6$.				
\end{example}

In view of Theorem \ref{multi4}(iii), we can split the family  of all graphs $G$ into three classes
 with respect to the value of  $msd_{\mathcal{P}}(G)$, 
 where $\mathcal{P} \subseteq \mathcal{I}$ is hereditary and closed under union with $K_1$. 
 We define that a graph $G$ belongs to the class
 $S_\mathcal{P}^i$ whenever  $msd_{\mathcal{P}}(G) = i$, $i \in \{1,2,3\}$. 
 It is straightforward to verify that if 
$k \geq 1$ and  $\mathcal{O} \subseteq \mathcal{P} \subseteq \mathcal{I}$ then 

$\bullet$ $P_{3k}, C_{3k}  \in S_\mathcal{P}^1$;  \  $P_{3k+2}, C_{3k+2}  \in S_\mathcal{P}^2$;  \   
and $P_{3k+1}, C_{3k+1}  \in S_\mathcal{P}^3$.

Thus, none of $S_\mathcal{P}^1, S_\mathcal{P}^2$ and $S_\mathcal{P}^3$ is empty. 

We conclude this part with an open problem. 
\begin{problem} \label{class}
Characterize the graphs belonging to
 $S_\mathcal{P}^i$, or find further properties of such graphs.
\end{problem}

Remark  that Dettlaff, Raczek and Topp   recently characterized all trees belonging to $S^1$ and $S^3$ (see \cite{drt}).

\subsection{Proofs}
For the proofs of  Theorems \ref{multi1} and  \ref{multi1}, we need the following results.

 \begin{them}[\cite{sam}]\label{a}
Let $\mathcal{H} \subseteq \mathcal{I}$ be nondegenerate and closed under union with $K_1$. 
Let $G$ be a graph and  $v \in V(G)$. 
\begin{itemize}
\item[(i)] If $v$ belongs to no $\gamma_{\mathcal{H}}$-set of $G$ then $\gamma_\mathcal{H}(G-v) = \gamma_{\mathcal{H}}(G) $.
\item[(ii)] If $\gamma_\mathcal{H}(G-v) < \gamma_{\mathcal{H}}(G) $ then 
                   $\gamma_\mathcal{H}(G-v) = \gamma_{\mathcal{H}}(G) - 1$. Moreover, 
if $M$ is a $\gamma_{\mathcal{H}}$-set of $G-v$ then $M \cup \{v\}$ is a  $\gamma_{\mathcal{H}}$-set of $G$ 
and  $\{v\} = pn_G[v,M \cup \{v\}]$.
\end{itemize}
\end{them}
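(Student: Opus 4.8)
The plan is to reduce both parts to two elementary ``transfer'' facts between $G$ and $G-v$, and then to push the counting carefully. \textbf{Fact A:} if $S\subseteq V(G)-v$ is a dominating $\mathcal{H}$-set of $G$, then it is also a dominating $\mathcal{H}$-set of $G-v$, and conversely any dominating $\mathcal{H}$-set of $G-v$ that also dominates $v$ in $G$ is a dominating $\mathcal{H}$-set of $G$. This holds because deleting $v$ (which lies in neither $S$ nor any edge used to dominate a vertex of $S$) leaves $\left\langle S,G\right\rangle=\left\langle S,G-v\right\rangle$ unchanged, so $\mathcal{H}$-membership is preserved, and domination of all surviving vertices is preserved as well. \textbf{Fact B:} if $D$ is a dominating $\mathcal{H}$-set of $G-v$ and $v$ has \emph{no} neighbour in $D$, then $D\cup\{v\}$ is a dominating $\mathcal{H}$-set of $G$: it dominates $v$ trivially, and since $v$ is isolated in $\left\langle D\cup\{v\},G\right\rangle$ we have $\left\langle D\cup\{v\},G\right\rangle=\left\langle D,G-v\right\rangle\cup K_1$, which lies in $\mathcal{H}$ precisely because $\mathcal{H}$ is closed under union with $K_1$. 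Nondegeneracy guarantees all three numbers $\gamma_\mathcal{H}(G)$, $\gamma_\mathcal{H}(G-v)$ exist.

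For part (i), assume $v$ lies in no $\gamma_\mathcal{H}$-set of $G$. Every $\gamma_\mathcal{H}$-set $M$ of $G$ then avoids $v$, so Fact~A gives $\gamma_\mathcal{H}(G-v)\le|M|=\gamma_\mathcal{H}(G)$. For the reverse, suppose toward a contradiction that $\gamma_\mathcal{H}(G-v)<\gamma_\mathcal{H}(G)$ and fix a $\gamma_\mathcal{H}$-set $D$ of $G-v$, so $|D|<\gamma_\mathcal{H}(G)$. If $v$ is dominated by $D$ in $G$, Fact~A makes $D$ a dominating $\mathcal{H}$-set of $G$ of size below $\gamma_\mathcal{H}(G)$ --- impossible. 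Otherwise Fact~B makes $D\cup\{v\}$ a dominating $\mathcal{H}$-set of $G$ of size $|D|+1\le\gamma_\mathcal{H}(G)$; if $|D|+1<\gamma_\mathcal{H}(G)$ this again contradicts minimality, while if $|D|+1=\gamma_\mathcal{H}(G)$ then $D\cup\{v\}$ is a $\gamma_\mathcal{H}$-set containing $v$, contradicting the hypothesis. Hence $\gamma_\mathcal{H}(G-v)=\gamma_\mathcal{H}(G)$.

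For part (ii), assume $\gamma_\mathcal{H}(G-v)<\gamma_\mathcal{H}(G)$ and let $M$ be any $\gamma_\mathcal{H}$-set of $G-v$. First, $v$ cannot be dominated by $M$ in $G$: otherwise Fact~A would make $M$ a dominating $\mathcal{H}$-set of $G$ with $|M|=\gamma_\mathcal{H}(G-v)<\gamma_\mathcal{H}(G)$, impossible. So $v$ has no neighbour in $M$, and Fact~B yields that $M\cup\{v\}$ is a dominating $\mathcal{H}$-set of $G$ of cardinality $\gamma_\mathcal{H}(G-v)+1$. This forces $\gamma_\mathcal{H}(G)\le\gamma_\mathcal{H}(G-v)+1$, which together with the strict inequality gives $\gamma_\mathcal{H}(G-v)=\gamma_\mathcal{H}(G)-1$ and certifies $M\cup\{v\}$ as a $\gamma_\mathcal{H}$-set of $G$. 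For the private-neighbour claim, since $v$ has no neighbour in $M$ we get $N[v,G]\cap(M\cup\{v\})=\{v\}$, so $v\in pn_G[v,M\cup\{v\}]$. Conversely, any $y\neq v$ with $N[y,G]\cap(M\cup\{v\})=\{v\}$ would satisfy $y\notin M$ and $N(y,G)\cap M=\emptyset$, making $y$ a vertex of $G-v$ that $M$ fails to dominate --- contradicting that $M$ dominates $G-v$. Thus $pn_G[v,M\cup\{v\}]=\{v\}$.

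The only genuinely delicate point is the case split on whether $v$ is already dominated: closure under union with $K_1$ is invoked exactly when $v$ is \emph{undominated}, so that appending the isolated vertex $v$ keeps the set inside $\mathcal{H}$; a naive argument that always adds $v$ without checking isolation would either break $\mathcal{H}$-membership or overcount. The secondary care needed is the bookkeeping in part (i) separating the ``strictly smaller'' outcome (a direct minimality contradiction) from the ``equal size'' outcome (which must be a $\gamma_\mathcal{H}$-set through $v$, contradicting the standing hypothesis).
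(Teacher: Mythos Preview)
Your proof is correct. Note, however, that the paper does not supply its own proof of this statement: Theorem~A is quoted from \cite{sam} as a known tool, so there is no in-paper argument to compare against. That said, your approach is exactly the natural one and almost certainly coincides with the original proof in \cite{sam}: the two transfer facts you isolate (a dominating $\mathcal{H}$-set of $G$ avoiding $v$ descends to $G-v$; a dominating $\mathcal{H}$-set of $G-v$ not covering $v$ ascends to $G$ by adjoining $v$ as an isolated vertex, using closure under union with $K_1$) are the standard mechanism, and your case split on whether $D$ already dominates $v$ is the canonical way to drive both inequalities. The private-neighbour verification at the end is clean and complete.
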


\begin{them}[\cite{sam}]\label{edgeadd}
Let $\mathcal{H} \subseteq \mathcal{I}$ be hereditary and closed under union with $K_1$. 
Let $e=uv$ be an edge of a graph $G$. 
 If $\gamma_\mathcal{H}(G) < \gamma_\mathcal{H}(G-e)$ then 
 $\gamma_\mathcal{H}(G) = \gamma_\mathcal{H}(G-e) - 1$. 
 Moreover $\gamma_\mathcal{H}(G) = \gamma_\mathcal{H}(G-e) - 1$ 
 if and only if at least one of the conditions (i) and (ii)  stated in Theorem  \ref{multi1}  holds.
\end{them}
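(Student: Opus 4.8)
The plan is to derive both assertions from one case analysis on a fixed $\gamma_\mathcal{H}$-set, after recording the identities $(G-e)-u=G-u$ and $(G-e)-v=G-v$ (since $e$ is incident to both endpoints), which is what lets conditions (i), (ii) of Theorem \ref{multi1} be read as statements about $\gamma_\mathcal{H}(G-u)$ and $\gamma_\mathcal{H}(G-v)$. Note that the hypotheses force $\mathcal{H}$ to be nondegenerate, so Theorem \ref{a} is available throughout.

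First I would establish the upper bound $\gamma_\mathcal{H}(G-e)\le\gamma_\mathcal{H}(G)+1$, which already yields the first assertion. Fix a $\gamma_\mathcal{H}$-set $M$ of $G$ and split according to how $M$ meets $\{u,v\}$. If $\{u,v\}\subseteq M$ or $\{u,v\}\cap M=\emptyset$, then $M$ remains a dominating $\mathcal{H}$-set of $G-e$ (heredity controls the induced subgraph, and the only domination possibly lost concerns $u$ or $v$, each of which here is in $M$ or keeps a dominator in $M$ other than its partner), so $\gamma_\mathcal{H}(G-e)\le|M|$. If exactly one endpoint, say $u$, lies in $M$ and $v\notin pn_G[u,M]$, the same holds because $v$ retains a dominator in $M$. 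The only escape is $u\in M$, $v\notin M$, $v\in pn_G[u,M]$ (or its mirror): here $M\cup\{v\}$ works, since deleting $e$ isolates $v$ in the induced subgraph, giving $\langle M\cup\{v\},G-e\rangle=\langle M,G\rangle\cup K_1\in\mathcal{H}$ by closure under union with $K_1$. Thus $\gamma_\mathcal{H}(G-e)\le\gamma_\mathcal{H}(G)+1$, and any increase is forced to be exactly $+1$. The same analysis gives the structural fact I will lean on: if $\gamma_\mathcal{H}(G-e)=\gamma_\mathcal{H}(G)+1$, then \emph{every} $\gamma_\mathcal{H}$-set of $G$ falls into this last case — one endpoint of $e$ in the set, the other its private neighbor.

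For the characterization, the easy direction is (i) or (ii) $\Rightarrow\gamma_\mathcal{H}(G)=\gamma_\mathcal{H}(G-e)-1$. Assuming (i) (the case (ii) being symmetric), I take a $\gamma_\mathcal{H}$-set $M_0$ of $G-u$ with $v\in M_0$ and observe that, because $e=uv$ is present in $G$, the vertex $v$ dominates $u$; hence $M_0$ is a dominating $\mathcal{H}$-set of $G$ and $\gamma_\mathcal{H}(G)\le\gamma_\mathcal{H}(G-u)$. Since $u\in V^-_\mathcal{H}(G-e)$ means $\gamma_\mathcal{H}(G-u)<\gamma_\mathcal{H}(G-e)$, we get $\gamma_\mathcal{H}(G)<\gamma_\mathcal{H}(G-e)$, and the upper bound turns this into the desired equality.

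The harder direction, $\gamma_\mathcal{H}(G)=\gamma_\mathcal{H}(G-e)-1\Rightarrow$ (i) or (ii), is where I expect the real work. Writing $k=\gamma_\mathcal{H}(G)$, I pick a $\gamma_\mathcal{H}$-set $M$, which by the structural fact satisfies (say) $u\in M$, $v\notin M$, $v\in pn_G[u,M]$, and I aim for (ii). Deleting the vertex $v\notin M$ leaves $M$ a dominating $\mathcal{H}$-set of $G-v$, so $\gamma_\mathcal{H}(G-v)\le k<k+1=\gamma_\mathcal{H}(G-e)$, giving $v\in V^-_\mathcal{H}(G-e)$. It remains to place $u$ in some $\gamma_\mathcal{H}$-set of $G-v$; if $\gamma_\mathcal{H}(G-v)=k$ then $M$ itself does it. The obstacle is excluding $\gamma_\mathcal{H}(G-v)=k-1$. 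For this I would invoke Theorem \ref{a}(ii): such a drop yields a $\gamma_\mathcal{H}$-set $M''$ of $G$ with $v\in M''$ and $pn_G[v,M'']=\{v\}$. Testing $M''$ against $e$ then contradicts $\gamma_\mathcal{H}(G-e)=k+1$ — if $u\in M''$ both endpoints lie in the set, while if $u\notin M''$ then $u\notin pn_G[v,M'']$ forces $u$ to keep a dominator in $M''$ after deleting $e$ — so in either case $M''$ would dominate $G-e$ with only $k$ vertices. This rules out $\gamma_\mathcal{H}(G-v)=k-1$, forcing $\gamma_\mathcal{H}(G-v)=k$ and establishing (ii); the mirror configuration of $M$ yields (i).
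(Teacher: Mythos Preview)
The paper does not give its own proof of this statement: Theorem~\ref{edgeadd} is quoted from \cite{sam} and used as a black box in the proofs of Theorems~\ref{multi1} and~\ref{multi4}. So there is no in-paper argument to compare against.

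That said, your proof is correct and self-contained. The case analysis on a $\gamma_\mathcal{H}$-set $M$ of $G$ cleanly gives both the bound $\gamma_\mathcal{H}(G-e)\le\gamma_\mathcal{H}(G)+1$ and the structural fact that equality forces every $\gamma_\mathcal{H}$-set of $G$ to contain exactly one endpoint of $e$ with the other as its private neighbor. The sufficiency direction is immediate from the identity $(G-e)-u=G-u$. For necessity, your use of Theorem~\ref{a}(ii) to rule out $\gamma_\mathcal{H}(G-v)<k$ is the right lever: the set $M''=M'\cup\{v\}$ produced there violates the structural fact in both subcases (if $u\in M''$ then $\langle M'',G-e\rangle$ is a subgraph of $\langle M'',G\rangle$, so heredity applies; if $u\notin M''$ then $pn_G[v,M'']=\{v\}$ forces $u$ to retain a dominator in $M''\setminus\{v\}$), yielding $\gamma_\mathcal{H}(G-e)\le k$ either way. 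Everything checks, including the appeal to heredity (not just induced-heredity) where an edge internal to the set is deleted.
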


\begin{them}[\cite{sam}]\label{+1l}
Let $e=xy$ be an edge of a graph $G$ and let 
 $\mathcal{H} \subseteq \mathcal{I}$ be hereditary and closed under union with $K_1$.
  If $\gamma_\mathcal{H}(G) > \gamma_\mathcal{H}(G-e)$ then:
	\begin{itemize}
	\item[(i)] no $\gamma_\mathcal{H}$-set of $G-e$ is an $\mathcal{H}$-set of $G$;
	\item[(ii)] both $x$ and $y$ are in all $\gamma_\mathcal{H}$-sets of $G-e$; 
	\item[(iii)] $\gamma_\mathcal{H}(G-x) \geq \gamma_\mathcal{H}(G-e)$ and  $\gamma_\mathcal{H}(G-y) \geq \gamma_\mathcal{H}(G-e)$;
	\item[(iv)] if $\gamma_\mathcal{H}(G-x) = \gamma_\mathcal{H}(G-e)$ then $y$ belongs to no $\gamma_\mathcal{H}$-set of $G-x$;
	\item[(v)] if $\gamma_\mathcal{H}(G-y) = \gamma_\mathcal{H}(G-e)$ then $x$ belongs to no $\gamma_\mathcal{H}$-set of $G-y$.
	\end{itemize}
\end{them}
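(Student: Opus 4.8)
The plan is to prove all five statements directly from two elementary observations, rather than leaning on the auxiliary results. First, adding an edge never destroys domination, so any vertex set that dominates $G-e$ also dominates $G$. Second, since $\mathcal{H}$ is hereditary, $\langle M,G\rangle\in\mathcal{H}$ forces $\langle M,G-e\rangle\in\mathcal{H}$, while conversely the induced subgraphs $\langle M,G\rangle$ and $\langle M,G-e\rangle$ coincide unless both endpoints $x,y$ lie in $M$. Throughout I would set $k=\gamma_\mathcal{H}(G-e)$, so the hypothesis reads $\gamma_\mathcal{H}(G)\ge k+1$, and I would repeatedly contradict it by exhibiting a dominating $\mathcal{H}$-set of $G$ that is too small.

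For (i) and (ii) I would argue together. Let $M$ be any $\gamma_\mathcal{H}$-set of $G-e$. By the first observation $M$ dominates $G$, so if $M$ were also an $\mathcal{H}$-set of $G$ it would be a dominating $\mathcal{H}$-set of $G$ of size $k$, forcing $\gamma_\mathcal{H}(G)\le k$ and contradicting the hypothesis; this is (i). It follows that $\langle M,G\rangle\notin\mathcal{H}$ while $\langle M,G-e\rangle\in\mathcal{H}$, so the two induced subgraphs differ, which by the second observation can only happen if the edge $xy$ is present in $\langle M,G\rangle$, i.e. $x,y\in M$. Since $M$ was arbitrary, this yields (ii).

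The core of the argument is (iii), which I would prove for $G-x$, the case $G-y$ being symmetric. Let $D$ be a $\gamma_\mathcal{H}$-set of $G-x$. Since $x\notin D$, the three induced subgraphs $\langle D,G-x\rangle$, $\langle D,G\rangle$ and $\langle D,G-e\rangle$ all coincide and hence lie in $\mathcal{H}$, and $D$ dominates every vertex of $V(G)\setminus\{x\}$ in each of these graphs. The whole question is therefore whether $D$ also dominates $x$. If $D$ dominates $x$ in $G-e$ (equivalently $(N(x,G)\setminus\{y\})\cap D\neq\emptyset$), then $D$ is a dominating $\mathcal{H}$-set of $G-e$ and $\gamma_\mathcal{H}(G-x)=|D|\ge\gamma_\mathcal{H}(G-e)$ at once. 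Otherwise $N(x,G)\cap D\subseteq\{y\}$, and I would split further: if $y\in D$ then $x$ is dominated in $G$ through the edge $xy$, so $D$ is a dominating $\mathcal{H}$-set of $G$ and $\gamma_\mathcal{H}(G-x)=|D|\ge\gamma_\mathcal{H}(G)>\gamma_\mathcal{H}(G-e)$; if $y\notin D$ then $x$ is isolated in $\langle D\cup\{x\},G\rangle$, so closure under union with $K_1$ makes $D\cup\{x\}$ a dominating $\mathcal{H}$-set of $G$, whence $\gamma_\mathcal{H}(G)\le|D|+1$ and, using $\gamma_\mathcal{H}(G)\ge k+1$, again $\gamma_\mathcal{H}(G-x)=|D|\ge k$.

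Claims (iv) and (v) then fall out of the middle subcase above. For (iv), assume $\gamma_\mathcal{H}(G-x)=\gamma_\mathcal{H}(G-e)=k$ and suppose some $\gamma_\mathcal{H}$-set $D$ of $G-x$ contained $y$; the computation of the ``$y\in D$'' subcase shows $D$ is a dominating $\mathcal{H}$-set of $G$ of size $k$, so $\gamma_\mathcal{H}(G)\le k$, contradicting the hypothesis, and (v) is symmetric. The one delicate point, and the main obstacle, is exactly the ``$D$ does not dominate $x$'' branch of (iii): there the naive construction $D\cup\{x\}$ only yields the weaker bound $\gamma_\mathcal{H}(G-e)\le\gamma_\mathcal{H}(G-x)+1$, and to recover the sharp inequality one must route the estimate through $\gamma_\mathcal{H}(G)$ and crucially use the strictness $\gamma_\mathcal{H}(G)>\gamma_\mathcal{H}(G-e)$ together with the $K_1$-closure (when $y\notin D$) or the restored edge $xy$ (when $y\in D$). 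Handling these two subcases correctly is where all the hypotheses are consumed.
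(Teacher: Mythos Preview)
Your argument is correct. The paper does not actually prove this theorem; it is quoted verbatim from \cite{sam} and used as a black box in the proof of Theorem~\ref{multi4}, so there is no in-paper proof to compare against. Your self-contained derivation from the two elementary observations (monotonicity of domination under adding edges, and the fact that $\langle M,G\rangle$ and $\langle M,G-e\rangle$ can differ only when $x,y\in M$) is exactly the natural route, and your three-way split in (iii) handles the cases cleanly.

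One small presentational point: when you justify (iv) by pointing to ``the $y\in D$ subcase'', that subcase in your write-up sits inside the branch where $N(x,G)\cap D\subseteq\{y\}$. A reader might wonder about the remaining possibility, $y\in D$ together with $(N(x,G)\setminus\{y\})\cap D\neq\emptyset$. Of course the same one-line reasoning applies there too --- $y\in D$ already forces $x$ to be dominated in $G$ via $xy$, and $\langle D,G\rangle=\langle D,G-x\rangle\in\mathcal{H}$ since $x\notin D$ --- so $D$ is a dominating $\mathcal{H}$-set of $G$ of size $k$, contradiction. You might state this observation once, independently of the case split in (iii), to make (iv) and (v) fully self-contained.
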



\begin{proof}[Proof of Theorem \ref{multi1}] 
Let $D$ be a $\gamma_{\mathcal{H}}$-set of $G-e$. 
Then since   $\mathcal{H}$ is closed under union with $K_1$, $D \cup \{x_2\}$ is a dominating $\mathcal{H}$-set of  $G_{e,3}$. 
Hence $\gamma_{\mathcal{H}}(G_{e,3}) \leq |D \cup \{y\}| \leq  \gamma_{\mathcal{H}}(G-e) + 1$. 

 For the left side inequality,  let $\widetilde{D}$ be
  a $\gamma_{\mathcal{H}}$-set of $G_{e,3}$ and $S = \widetilde{D} \cap \{x_1,x_2,x_3\}$. 
If $S=\{x_2\}$ then  $\widetilde{D}- \{x_2\}$ is a dominating $\mathcal{H}$-set of $G-e$ and 
$\gamma_{\mathcal{H}}(G-e) \leq |\widetilde{D}- \{x_2\}| = \gamma_{\mathcal{H}}(G_{e,3}) - 1$. 
If $S = \{x_1,x_2\}$  then $pn_{G_{e,3}}[x_1,\widetilde{D}] = \{u\}$ and hence $\widetilde{D}_1 = (\widetilde{D} - \{x_1,x_2\}) \cup \{u\}$ 
is a dominating $\mathcal{H}$-set of $G-e$  which implies
 $\gamma_{\mathcal{H}}(G-e) \leq |\widetilde{D}_1| < |\widetilde{D}| = \gamma_{\mathcal{H}}(G_{e,3})$.

Let $S = \{x_1\}$. 
If $u \in pn[x_1,\widetilde{D}]$ then $\widetilde{D}_2 = (\widetilde{D} - \{x_1\}) \cup \{u\}$  is a dominating $\mathcal{H}$-set of $G-e$ and 
hence  $\gamma_{\mathcal{H}}(G-e) \leq |\widetilde{D}_2| = |\widetilde{D}| = \gamma_{\mathcal{H}}(G_{e,3})$.
If $u \not\in pn[x_1,\widetilde{D}]$ then $\widetilde{D} - \{x_1\}$  is a dominating $\mathcal{H}$-set of $G-e$ and 
$\gamma_{\mathcal{H}}(G-e) \leq |\widetilde{D}| - 1  = \gamma_{\mathcal{H}}(G_{e,3}) - 1$.

If $S = \{x_1,x_3\}$  then at least one of $pn_{G_{e,3}}[x_1,\widetilde{D}] = \{x_1, u\}$ and $pn_{G_{e,3}}[x_3,\widetilde{D}] = \{x_3, v\}$ holds, 
otherwise $(\widetilde{D} - \{x_1,x_3\}) \cup \{x_2\}$ would be a dominating $\mathcal{H}$-set of $G_{e,3}$, 
contradicting the choice of $\widetilde{D}$.
Say, without loss of generality, $pn_{G_{e,3}}[x_3,\widetilde{D}] = \{x_3, v\}$. 
Then $\widetilde{D}_3 = (\widetilde{D} - \{x_3\}) \cup \{v\}$ is 
a $\gamma_{\mathcal{H}}$-set of $G_{e,3}$
and $\widetilde{D}_3\cap \{x_1,x_2,x_3\} = \{x_1\}$. As above we obtain  
$\gamma_{\mathcal{H}}(G-e) <  \gamma_{\mathcal{H}}(G_{e,3})$.
By reason of symmetry, the left side inequality is proved.
\medskip

($\mathbb{A}_2$) $\Rightarrow$ ($\mathbb{A}_1$)
Assume without loss of generality that (i) holds. 
Let $D$ be a $\gamma_\mathcal{H}(G-u)$-set and $v \in D$. 
By Theorem \ref{a}, $D \cup \{u\}$ is a $\gamma_\mathcal{H}$-set of $G-e$ and $pn_{G-e}[u, D \cup \{u\}] = \{u\}$. 
 Hence $D \cup \{x_1\}$ is a dominating $\mathcal{H}$-set of $G_{e,3}$ and 
$\gamma_{\mathcal{H}}(G_{e,3}) \leq |D \cup \{x_1\}| = \gamma_{\mathcal{H}}(G-e)$. 
 But we have already known that $\gamma_{\mathcal{H}}(G_{e,3}) \geq  \gamma_{\mathcal{H}}(G-e)$.
  Therefore $\gamma_{\mathcal{H}}(G_{e,3})  = \gamma_{\mathcal{H}}(G-e)$.
	\medskip
	
($\mathbb{A}_1$) $\Rightarrow$ ($\mathbb{A}_2$) 
Suppose $\gamma_{\mathcal{H}}(G-e) = \gamma_{\mathcal{H}}(G_{e,3})$. 
	Let $\widetilde{D}$ be a $\gamma_{\mathcal{H}}(G_{e,3})$-set and $S = \widetilde{D} \cap \{x_1,x_2,x_3\}$.  
	If $S=\{x_2\}$ then  $\widetilde{D}- \{x_2\}$ is a dominating $\mathcal{H}$-set of $G-e$, a contradiction. 
	If $S = \{x_1,x_2\}$  then clearly $pn_{G_{e,3}}[x_1,\widetilde{D}] = \{u\}$ which implies that 
	$ (\widetilde{D} - \{x_1,x_2\}) \cup \{u\}$ is a dominating $\mathcal{H}$-set of $G-e$, a contradiction. 
	
Let $S = \{x_1\}$. Hence $v \in \widetilde{D}$.  
If $u \not\in pn_{G_{e,3}}[x_1,\widetilde{D}]$ then $\widetilde{D} - \{x_1\}$ 
 is a dominating $\mathcal{H}$-set of $G-e$, a contradiction. 
	If $u \in pn_{G_{e,3}}[x_1,\widetilde{D}]$ then $D_1 = (\widetilde{D} - \{x_1\}) \cup \{u\}$ is a $\gamma_\mathcal{H}$-set of $G-e$, 
	$u, v \in D_1$, $D_1 - \{u\}$ is a $\gamma_{\mathcal{H}}$-set of $G-u$ (by Theorem \ref{a}) and $v \in D_1 - \{u\}$. 
	In addition it follows that $u \in V^-_\mathcal{H}(G-e)$.  Thus, (i) holds.

By symmetry it remains the case when  $S = \{x_1,x_3\}$.  
If $u \not\in pn_{G_{e,3}}[x_1, \widetilde{D}]$ and $v\not\in pn_{G_{e,3}}[x_3, \widetilde{D}]$ then 
$\widetilde{D} - \{x_1,x_3\}$  is a dominating $\mathcal{H}$-set of $G-e$, a contradiction. 
If $u \in pn_{G_{e,3}}[x_1, \widetilde{D}]$ and $v\not\in pn_{G_{e,3}}[x_3, \widetilde{D}]$ then 
$(\widetilde{D} - \{x_1,x_3\}) \cup \{u\}$ is a dominating $\mathcal{H}$-set of $G-e$, a contradiction. 
So, $u \in pn_{G_{e,3}}[x_1, \widetilde{D}]$ and $v\in pn_{G_{e,3}}[x_3, \widetilde{D}]$. 
Then $D_2 = (\widetilde{D} - \{x_1,x_3\}) \cup \{u, v\}$  is a $\gamma_\mathcal{H}$-set of $G-e$ and 
both $\{u\} = pn_{G-e}[x_1, D_2]$ and $\{v\} = pn_{G-e}[x_3, D_2]$ hold.
  Thus both (i) and (ii) are fulfilled. 
\medskip
  
($\mathbb{A}_2$) $\Leftrightarrow$ ($\mathbb{A}_3$)
  By Theorem \ref{edgeadd}.
\end{proof}
\medskip

\begin{proof}[Proof of Theorem \ref{multi4}] 
(i)
 {\it Necessity}: Let $\gamma_\mathcal{H}(G) = \gamma_\mathcal{H}(G_{e,3})$. 
By Theorem \ref{multi1} we know that 
$\gamma_{\mathcal{H}}(G-e) \leq \gamma_{\mathcal{H}}(G_{e,3}) \leq  \gamma_{\mathcal{H}}(G-e) + 1$  
 and if $\gamma_{\mathcal{H}}(G-e) = \gamma_{\mathcal{H}}(G_{e,3})$ then  
 $\gamma_{\mathcal{H}}(G_{e,3}) =  \gamma_{\mathcal{H}}(G) + 1$. 
 Thus $\gamma_\mathcal{H}(G) = \gamma_\mathcal{H}(G_{e,3}) = \gamma_\mathcal{H}(G-e) +1$. 
 \medskip

{\it Sufficiency}:
 Let $\gamma_{\mathcal{H}}(G-e) + 1 = \gamma_{\mathcal{H}}(G)$.
 Assume $\gamma_\mathcal{H}(G) \not= \gamma_\mathcal{H}(G_{e,3})$. 
 Now by Theorem \ref{multi1}, $\gamma_\mathcal{H}(G_{e,3}) = \gamma_\mathcal{H}(G-e)$. 
 Applying again Theorem \ref{multi1} we obtain 
 $\gamma_\mathcal{H}(G) = \gamma_\mathcal{H}(G-e) -1$, a contradiction. 
 Thus, $\gamma_\mathcal{H}(G) = \gamma_\mathcal{H}(G_{e,3})$.
\newpage

 (ii) By (i), $\gamma_\mathcal{H}(G) = \gamma_\mathcal{H}(G_{e,3})$. 
Let $M$ be a $\gamma_\mathcal{H}$-set  of $G-e$ and $e=uv$.  By Theorem \ref{+1l}(ii), both $u$ and $v$ are in $M$. 
 Then (a) $M$ is a dominating $\mathcal{H}$-set of $G_{e,1}$ and $G_{e,2}$, 
(b) $M \cup \{x_3\}$ is  a dominating $\mathcal{H}$-set of $G_{e,4}$ and $G_{e,5}$, and 
(c) $M \cup \{x_3, x_5\}$ is  a dominating $\mathcal{H}$-set of $G_{e,6}$.  Hence 
\begin{itemize}
\item[(A)] $\gamma_\mathcal{H}(G_{e,i})  \leq \gamma_\mathcal{H}(G-e) = \gamma_\mathcal{H}(G) -1$ for $i = 1, 2$;
                     $\gamma_\mathcal{H}(G_{e,j})  \leq \gamma_\mathcal{H}(G-e)  + 1 = \gamma_\mathcal{H}(G)$ for $i = 4,5$; 
                      $\gamma_\mathcal{H}(G_{e,6})  \leq \gamma_\mathcal{H}(G-e) + 2 = \gamma_\mathcal{H}(G) + 1$.
\end{itemize}
By Theorem \ref{+1l}, $\min\{\gamma_\mathcal{H}(G-u), \gamma_\mathcal{H}(G-v)\} \geq \gamma_\mathcal{H}(G-e)$ 
and by Theorem \ref{a} we have 
$\gamma_\mathcal{H}(G-\{u,v\}) = \gamma_\mathcal{H}((G-u) - v) \geq \gamma_\mathcal{H}(G-u) - 1 \geq \gamma_\mathcal{H}(G-e) -1$.
 Suppose that $\gamma_\mathcal{H}(G-\{u,v\}) = \gamma_\mathcal{H}(G-e) -1$. 
 Then both $\gamma_\mathcal{H}(G-u) = \gamma_\mathcal{H}(G-e)$ and 
$\gamma_\mathcal{H}((G-u) - v) = \gamma_\mathcal{H}(G-u) - 1$ hold. 
By the second equality and Theorem \ref{a} we deduce that 
$v$ belongs to some $\gamma_{\mathcal{H}}$-set of $G-u$. 
 On the other hand, since 
 $\gamma_\mathcal{H}(G) = \gamma_\mathcal{H}(G-e) + 1 > \gamma_\mathcal{H}(G-u)$, 
 $v$ belongs to no $\gamma_\mathcal{H}$-set of $G-u$, a contradiction.
 Thus, 
\begin{itemize}
\item[(B)]  $\min\{\gamma_\mathcal{H}(G-u), \gamma_\mathcal{H}(G-v), \gamma_\mathcal{H}(G-\{u, v\}) \} 
                       \geq \gamma_\mathcal{H}(G-e)$.
\end{itemize}
Let $D_t$ be a $\gamma_\mathcal{H}$-set of $G_{e,t}$ and $U_t = D_t \cap \{x_1,\dots,x_t\}$, where $t = 1,\dots,6$. 

{\it Case} 1: $t \in \{1,2\}$. 

Assume $U_t \not = \emptyset$. 
Then $D_t - U_t$ is a dominating $\mathcal{H}$-set for at least one of the graphs 
$G-e$, $G-u$, $G-v$ and $G - \{u,v\}$.  Using $(B)$ we have 
\[
\gamma_\mathcal{H}(G) = \gamma_\mathcal{H}(G-e) + 1  \leq |D_t - U_t| +1 = \gamma_\mathcal{H} (G_{e,t}) - |U_t| +1 \leq \gamma_\mathcal{H} (G_{e,t}),
\]
contradicting $(A)$.  Thus $U_t$ is empty. 
But then $D_t$ is a  dominating $\mathcal{H}$-set of $G-e$  which leads to 
$\gamma_\mathcal{H} (G_{e,t}) \geq  \gamma_\mathcal{H} (G-e)$.   
Now by $(A)$ it follows the  equality $\gamma_\mathcal{H} (G_{e,t}) =  \gamma_\mathcal{H} (G-e)$. 

{\it Case} 2: $t \in \{4,5\}$.  

Obviously $U_t \not = \emptyset$. As in Case 1 we obtain $\gamma_\mathcal{H}(G) \leq \gamma_\mathcal{H} (G_{e,t})$. 
Since by $(A)$ $\gamma_\mathcal{H}(G_{e,t})  \leq  \gamma_\mathcal{H}(G)$, we have 
$\gamma_\mathcal{H}(G_{e,t})  =  \gamma_\mathcal{H}(G)$.

{\it Case} 3: $t =6$.  

Clearly $|U_6| \geq 2$. 
As in Case 1 we obtain  $\gamma_\mathcal{H}(G) \leq \gamma_\mathcal{H} (G_{e,6}) - |U_6| +1$. 
Since $|U_6| \geq 2$,  $\gamma_\mathcal{H}(G) \leq \gamma_\mathcal{H} (G_{e,6}) - 1$. 
Now by $(A)$ we deduce that $\gamma_\mathcal{H}(G) = \gamma_\mathcal{H} (G_{e,6}) - 1$. 
\medskip

(iii) Immediately by (i) and (ii).
\end{proof}

\newpage

{\small
}

\end{document}